\newcommand{\pl}{{\operatorname{pl}}}
\newcommand{\stick}{{\operatorname{stick}}}
\newcommand{\cro}{{\operatorname{cr}}}
\newtheorem{lem}{Lemma}
\newtheorem{cor}{Corollary}
\newtheorem{prob}{Problem}
\newtheorem{obs}{Observation}
\def\b{\beta}
\def\o*{\overline{*}}
\def\u*{\underline{*}}
\theoremstyle{definition}
\newtheorem{example}{Example}
\newtheorem{defn}{Definition}
\declaretheorem[name=Theorem,numberwithin=section]{thm}
\begin{document}

\title{Planar stick indices of some knotted graphs}

\author{Tirasan Khandhawit, Puttipong Pongtanapaisan, Athibadee Wasun}

\date{\today}

\maketitle
\begin{abstract}
Two isomorphic graphs can have inequivalent spatial embeddings in 3-space. In this way, an isomorphism class of graphs contains many spatial graph types. A common way to measure the complexity of a spatial graph type is to count the minimum number of straight sticks needed for its construction in 3-space. In this paper, we give estimates of this quantity by enumerating stick diagrams in a plane. In particular, we compute the planar stick indices of knotted graphs with low crossing numbers. We also show that if a bouquet graph or a theta-curve has the property that its proper subgraphs are all trivial, then the planar stick index must be at least seven.

\end{abstract}
\section{Introduction}
For many decades, scientists have been interested in synthesizing molecules in the shape of various graphs (see \cite{kim2018coordination,li2011metallosupramolecular}, for instance). It is reasonable to model an atom as a vertex and a bond between atoms as a straight stick connecting the vertices, resulting in a rigid piecewise-linear presentation of the knotted graph. Thus, to construct a knotted molecule with certain level of complexity, it is beneficial to understand the minimum number of straight sticks one needs to construct a knotted graph type. Such a quantity called the \textbf{stick number} is easy to define, but can be difficult to compute.

When the graph type is the cycle graph, the stick number is a complexity measure for knots. In fact, it is not known what the precise values of the stick numbers of the majority of eight crossing knots are at the time of this writing and not many lower estimates exist. Huh was able to show that any seven points in general position of 3-space constitute at most three
heptagonal figure-eight knots \cite{huh2012knotted} because knots with seven sticks are well-understood. The lack of classification of knots with stick number nine makes the task of determining the types of quantities of knots in a linear embedding of a complete graph challenging.

In this paper, we investigate a planar analog of the stick index. The concept was introduced by Adams et al. in \cite{adams2011planar} for knots, and we extend the idea to spatial graphs. The following task is still not easy and may require the aid of computers, but it gives an alternate perspective to analyze the stick numbers.
\begin{prob}\label{question:classify}
  Enumerate planar stick diagrams of knots with eight sticks.
\end{prob}
 That is, if a knot from the list of 8-crossing knots with unknown stick number does not belong to the list from Problem \ref{question:classify}, then the stick index is precisely 10.

Stick number of more general graph types were studied in \cite{lee2017stick}, where the authors bound the stick number from above by a function of crossing number, number of vertices/edges, and number of bouquet cut components. In this paper, we characterize some properties of knotted objects based on their planar stick indices. In particular, we are interested in \textbf{ravels}, which are in spatial graphs that are not planar, but contains no knots or links. A ravel is intriguing because its entanglement cannot be determined by looking at a particular cycle. Our results can be summarized as follows:

\begin{enumerate}
    \item The planar stick index of knots \(3_1\) and \(5_1\) is 5. The planar stick index of knots \(4_1, 5_2, 6_1, 6_2,\) and \(7_4\) is 6. The planar stick index of knots other than these knots is at least seven.
    \item  If $G$ is a ravel (topological) bouquet graph or a $\theta$-curve, then the planar stick index of $G$ is at least seven.
\end{enumerate}

In each case above, we will give specific spatial graphs that realize the lower bounds. Compared to other spatial graphs, bouquet graphs and $\theta$-curves have appeared in numerous biological objects \cite{dabrowski2019theta,castle2008ravels,ceniceros2023rna,li2011metallosupramolecular}. In \cite{flapan2019stick}, the authors used the stick numbers to compute the probability that a random linear embedding of $K_{3,3}$ in a cube is in M\"{o}bius form. Perhaps, the computations in this paper can be combined with other results to provide insight into knotting probability.

\subsection*{Organization}
This paper is organized as follows. In Section \ref{section:background}, we gather the definitions that readers can refer back to. To make the definitions more understandable, we demonstrate them in examples. Some of these examples will be the ones that provide sharp bounds in the main theorems. In Section \ref{section:relations}, we relate the planar stick index to other known invariants. In Section \ref{section:cycle}, we compute planar stick index of cycle graphs, which is the setting of knots. Sections \ref{section:bouquets} and \ref{section:theta} contain results on planar stick indices of bouquet graphs and $\theta$-curves, respectively.

\section{Background}\label{section:background}

A \textbf{graph} is a 1-dimensional complex made up of vertices and edges. We study embeddings of graphs in the 3-space $\mathbb{R}^3$ up to ambient isotopy. This is a generalization of knot theory since the underlying graph type of a knot can be taken to be a cycle graph. Due to a result of Kauffman \cite{kauffman1989invariants}, these graph embeddings can be studied combinatorially and diagrammatically.
\begin{defn}
A \textbf{spatial graph diagram} is an immersion of a graph $G$ in the plane such that the image of a vertex does not coincide with the image of a different vertex or a point in the interior of an edge. We also require the edges to intersect transversely, where each double point is decorated as a classical crossing. Referring to Figure \ref{fig:greid}, a \textbf{rigid-vertex spatial graph} is an equivalence class of spatial graph diagrams modulo Reidemeister moves R1-R5. A \textbf{topological spatial graph} (or simply \textbf{a spatial graph}) is an equivalence class of spatial graph diagrams modulo Reidemeister moves R1-R6 of Figure \ref{fig:greid}.
\end{defn}
\begin{figure}[ht!]
    \centering
    \includegraphics[scale=0.2]{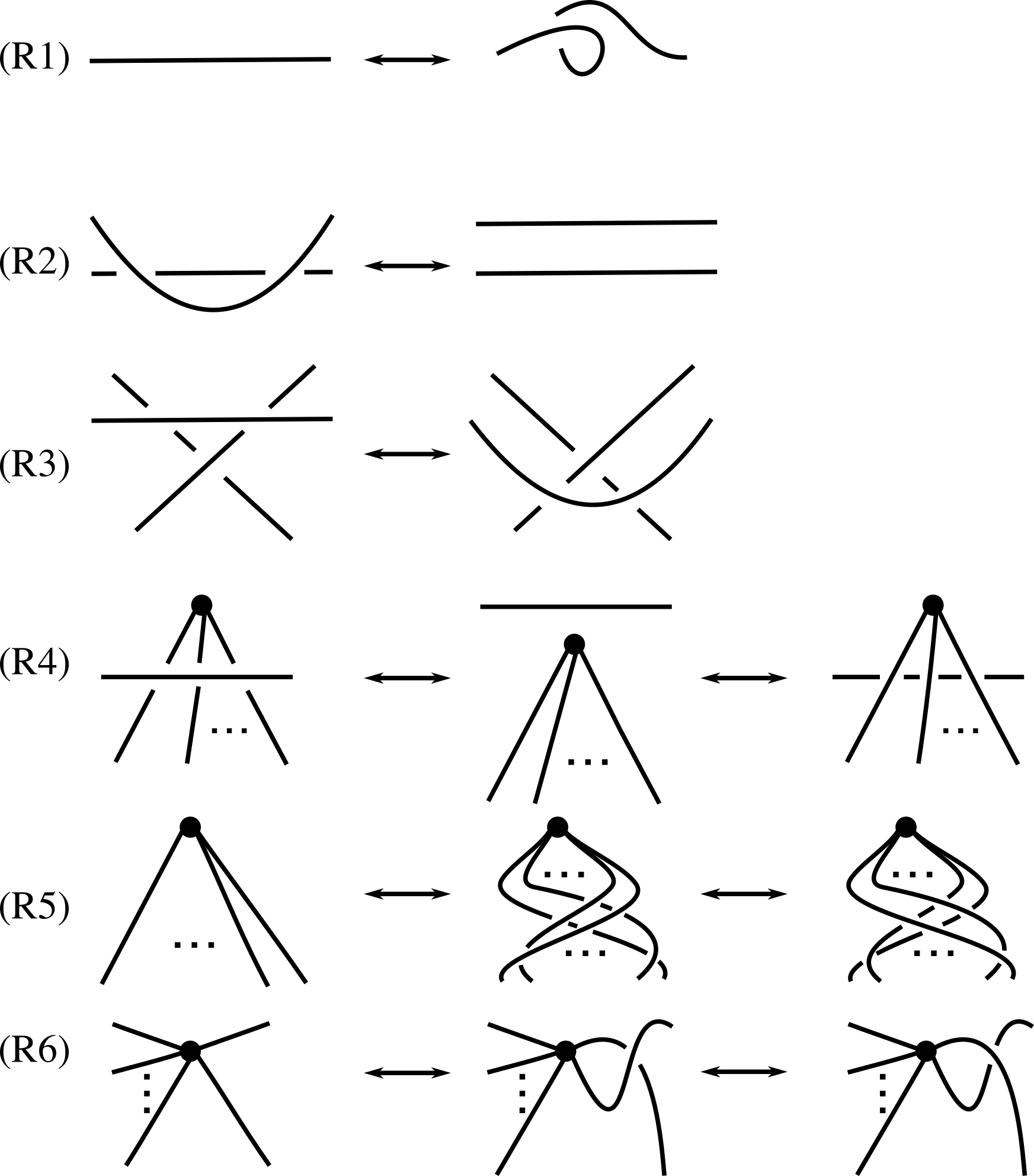}
    \caption{Reidemeister moves for graphs.}
    \label{fig:greid}
\end{figure}
\begin{defn}
A \textbf{planar stick diagram} of a spatial graph is a spatial graph diagram (with or without crossing information) where each edge is linear. The \textbf{planar stick index} $\pl[G]$ of a spatial graph is the smallest number of edges in any planar stick diagram of $G$.
\end{defn}

This is the planar analog of the following more studied quantity.
\begin{defn}
The 3D \textbf{stick number} $\stick[G]$ of $G$ is the minimum number of straight sticks one needs to construct a knotted graph type in $\mathbb{R}^3$.
\end{defn}
\begin{example}\label{Kinoexample}
    The left of Figure \ref{fig:kino} shows a projection of a piecewise linear embedding of the Kinoshita $\theta$-curve taken from \cite{huh2009stick}. To realize the $\theta$-curve in 3-space, the ``bend" in the stick that is not a part of the star-shape cycle is truly necessary. On the other hand, the right of Figure \ref{fig:kino} shows that on the diagram level, that bending is not needing and the planar stick index is at most seven. In this paper, we show that it is exactly seven.
\end{example}
\begin{figure}[ht!]
    \centering
    \includegraphics[scale=0.4]{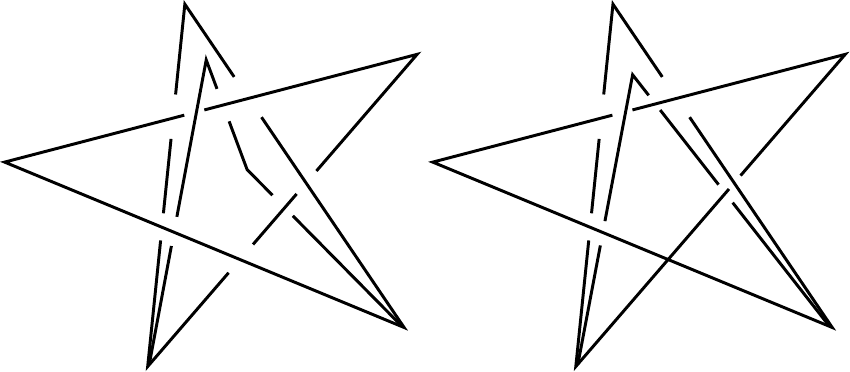}
    \caption{The Kinoshita $\theta$-curve has planar stick index seven, but 3D stick number eight.}
    \label{fig:kino}
\end{figure}
As mentioned before, the authors in \cite{lee2017stick} exhibit a relationship between the following invariant and stick numbers.
\begin{defn}
    The \textbf{crossing number} $\cro[G]$ is the minimum number of crossings over all diagrams of $G$. 
\end{defn}
Next, we remind the readers of a knot invariant that is related to the planar stick index of cycle graphs.
\begin{defn}
    The \textbf{bridge number} $\b[K]$ of a knot is the minimum number of local maxima with respect to the standard height function in $\mathbb{R}^2$ over all diagrams of $K$. 
\end{defn}
Lastly, there is a type of embeddings that is interesting because we cannot determine the nontrivaility of the graph based on the collection of links contained in the embedding. 
\begin{defn}
  A nontrivial spatial graph having the property that no collection
of disjoint cycles is a non-trivial link is called a \textbf{ravel}.
\end{defn}

In \cite{castle2008ravels}, the authors also considered the notion of \textbf{$n$-ravels} which are non-trivial
entanglements around a vertex with no knots and links formed by the mutual weaving
of $n$ edges emerging from a single vertex. Bouquet graphs and $\theta$-curves arise from closing up ravels in various ways (see Figure \ref{fig:vertexclosure}).

\begin{figure}[ht!]
    \centering
    \includegraphics[scale=0.3]{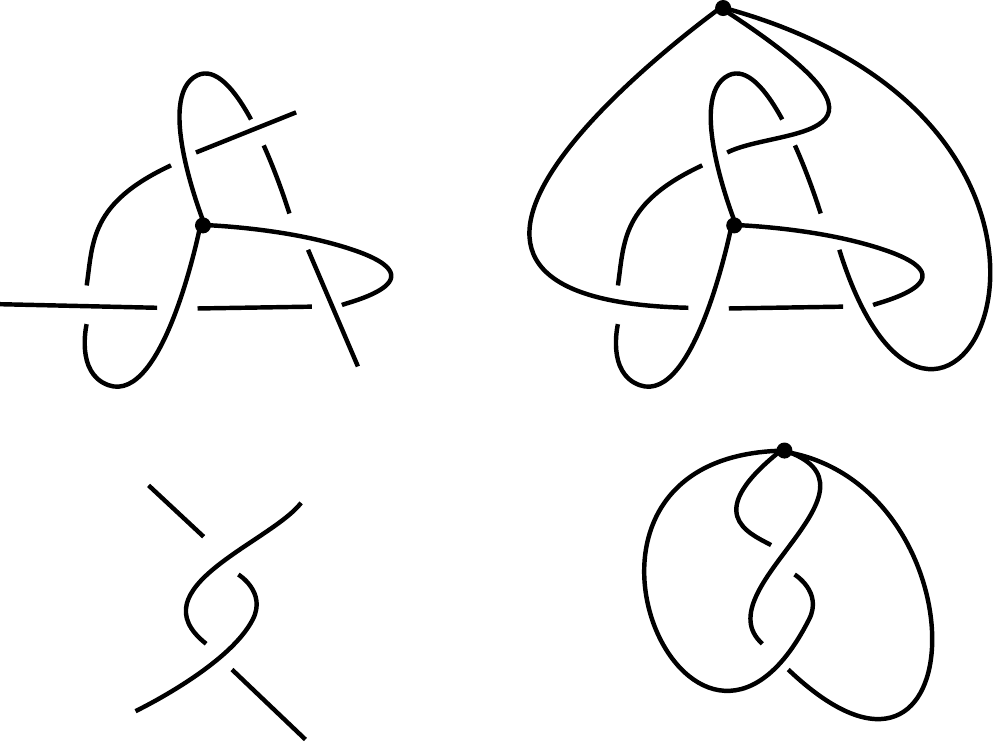}
    \caption{(Top) The vertex closure of a 3-ravel is a $\theta$-curve. (Bottom) The vertex closure of a 2-string tangle is a bouquet graph.}
    \label{fig:vertexclosure}
\end{figure}
\subsection{Tri-colorability}
Later in the paper, we need a way to verify that certain spatial graphs are topologically nontrivial. This is a harder task than verifying rigid vertex inequivalence. An easily computable topological invariant in knot theory is called \textbf{tri-colorabilility} (see Section 1.5 of the famous knot book \cite{adams1994knot} by Colin Adams). Ishii showed in \cite{ishii1997color} that the idea extends into spatial graphs whose vertices all have even degrees. More precisely, a spatial graph diagram is \textbf{tricolorable} if each of the strands in the projection can be colored one of three different colors, so that at each crossing, either three different colors come together or all the same color comes together. We further require that at least two of the colors are used and all arcs adjacent to the same vertex are the same color.
\begin{example}\label{RavelExample}
    The spatial bouquet graph diagram in Figure \ref{fig:3color} is tricolorable. Therefore, it is topologically nontrivial as the trivial graph is not tricolorable.
\end{example}
\begin{figure}[ht!]
    \centering
    \includegraphics[scale=0.5]{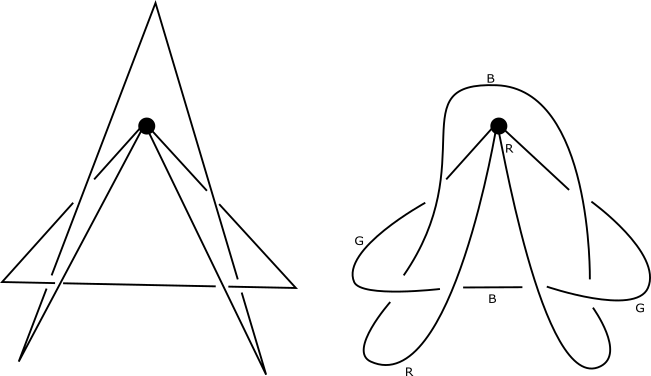}
    \caption{(Left) A planar stick diagram of $6^k_{19}$ with six sticks. (Right) A smoothed out version of the image on the left, so that the tricolorability can be demonstrated.}
    \label{fig:3color}
\end{figure}
\section{Relations to other invariants}\label{section:relations}
Let $\stick[G]$ denote the 3D stick number of a graph.  In \cite{adams2011planar}, the authors showed that the planar stick index is at most one less than the stick number. The following statement is a generalization to the spatial graph setting. 
\begin{thm}
    If $G$ is a spatial graph such that embeddings realizing $\stick[G]$ contain a cycle of length at least four, then $\pl[G] \leq \stick[G]-1.$
\end{thm}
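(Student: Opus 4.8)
The plan is to generalize the knot argument of Adams et al.\ by starting from a minimal stick embedding in $\mathbb{R}^3$ and projecting it to the plane in a direction chosen so that two consecutive sticks of the given cycle become collinear. Concretely, fix an embedding realizing $\stick[G]$ with $s = \stick[G]$ straight sticks, and let $C$ be a cycle in it of length at least four. Walking along $C$, I would select three consecutive vertices $w_1, w_2, w_3$ for which the middle vertex $w_2$ has degree two, so that the two incident sticks $e_1 = w_1w_2$ and $e_2 = w_2 w_3$ form part of a single edge of $G$; a degree-two vertex of this kind is exactly the feature that lets us later erase a stick without altering the spatial graph type. Minimality guarantees that $e_1$ and $e_2$ are not collinear in $\mathbb{R}^3$ (otherwise we could already merge them and use fewer sticks), so $w_1, w_2, w_3$ span a genuine plane $P$.

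The core step is the choice of projection direction. Any direction $d$ lying in the plane $P$ forces $\pi_d(w_1), \pi_d(w_2), \pi_d(w_3)$ to be collinear, since $P$ maps onto a line under $\pi_d$. Within the one-parameter family of directions in $P$ I would then impose two further conditions: first, that $\pi_d(w_2)$ falls strictly between $\pi_d(w_1)$ and $\pi_d(w_3)$, which holds for an open arc of directions; and second, that the resulting planar image is a regular diagram of $G$ (transverse double points, no triple points, and no vertex meeting the interior of an edge), which fails only for finitely many directions. Intersecting these conditions leaves an admissible direction $d$. Projecting along $d$ then yields a diagram of $G$ in which $e_1$ and $e_2$ lie on a common segment with $w_2$ interior to it; since $w_2$ is a degree-two vertex, we smooth it and replace $e_1 \cup e_2$ by the single stick $\pi_d(w_1)\pi_d(w_3)$, producing a planar stick diagram of $G$ with $s - 1$ sticks, whence $\pl[G] \le \stick[G] - 1$.

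Here the hypothesis that $C$ has length at least four plays its role: after the merge the image of $C$ is a closed polygon with at least three sticks, hence a non-degenerate embedded curve, so the new diagram genuinely represents $G$; had $C$ been a triangle, the merge would collapse it to a doubled segment and the diagram would be invalid.

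The main obstacle I anticipate is the simultaneous satisfaction of the two projection conditions within the restricted family $P \cap S^2$ rather than over all of $S^2$: I must check that the open betweenness arc is not entirely consumed by the finitely many directions producing a non-regular diagram, and that degenerations special to this constrained family (for instance two sticks of $C$ becoming parallel in projection, or a distant stick passing exactly through the image of $w_2$) remain avoidable. A secondary point requiring care is the guarantee that a degree-two vertex can be located along $C$; I would handle this by choosing $C$ to traverse a subdivided edge, and flag the case in which every vertex of $C$ has degree at least three as needing a separate argument.
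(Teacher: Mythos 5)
Your construction is sound in essence, but it takes a different route from the paper's, which is considerably shorter: the paper perturbs the minimal 3D embedding into general position and then projects onto a plane \emph{normal} to one stick of the cycle, so that this stick collapses to a single point and simply disappears, giving a planar diagram with $\stick[G]-1$ sticks at once. Your projection direction is instead chosen \emph{inside} the plane $P$ spanned by two adjacent sticks, so that their images straighten into one segment and can be merged. The two tricks are close cousins --- the paper's direction (parallel to $e_1$) is exactly the boundary case of your admissible arc, where betweenness degenerates --- but yours requires the extra betweenness analysis while the paper's requires none. The hypothesis that the cycle has length at least four plays the identical role in both arguments: after losing one stick the cycle must still be a polygon with at least three sides. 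Note also that both proofs share the same blind spot, which you flag honestly and the paper passes over in silence: if every corner of the cycle is a genuine graph vertex of degree at least three (as happens for a linear embedding of $K_4$ and its $4$-cycles), then the paper's collapse identifies the images of two distinct vertices --- not a legal diagram under the paper's own definition --- just as your merge is unavailable. In the paper's intended applications (knots, $\theta$-curves, bouquet graphs, which have at most two vertices of degree exceeding two), a cycle made of at least four sticks always carries a subdivision corner, so both arguments go through there.

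The one point you must repair is the genericity claim. Without first perturbing the embedding, it is not true that regularity fails for only finitely many directions in the one-parameter family $P\cap S^2$: for instance, if another pair of adjacent sticks happens to lie in a plane parallel to $P$, their images are collinear for \emph{every} $d\in P\cap S^2$ and overlap along a segment for a whole arc of directions, which could swallow your betweenness arc. The fix is exactly the paper's first step: perturb the vertices slightly, preserving linearity, the stick number, and the isotopy class, so that the configuration is in general position before $d$ is chosen; then the bad set inside your open betweenness arc really is finite and an admissible direction exists.
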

\begin{proof}
    Let $c$ be the cycle in the graph made up of $n$ edges $e_1,e_2,\cdots, e_n$, where $n\geq 4$. After perturbing the embedding slightly preserving the linearity of edges, there is an edge such that projecting onto a plane normal to that edge gives a planar stick diagram of $\stick[G]-1$ sticks. The inequality follows from this observation.
\end{proof}

The following result shows a relationship between $\pl[G]$ and $\cro[G]$. We now set up the notations. Let $D$ be a planar stick diagram $D$ realizing $\pl[G]$. Let $n_d$ be the number of edges of $D$ each with the property that $d$ other edges are connected to it. For instance, an edge adjacent to the 4-valent vertex in Figure \ref{fig:3color} has $d=4,$ since a vertex bounding that edge has three other edges adjacent to it and the other vertex bounding the edge has one vertex adjacent to it.
\begin{thm}\label{thm:crossing}
    Let $G$ be a spatial graph. Then, $\cro[G] \leq \dfrac{\sum_{d} n_d(\pl[G]-d-1)}{2}$, where the sum ranges over all possible numbers arising as sums of the degrees of the two vertices bounding an edge. 
\end{thm}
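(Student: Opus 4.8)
The plan is to bound the crossing number by the number of crossings carried by the fixed diagram $D$, and then to bound that number edge-by-edge using the linearity of the sticks. Since $D$ is in particular a diagram of $G$, the definition of crossing number gives at once
$\cro[G] \le c(D)$, where $c(D)$ denotes the number of crossings appearing in $D$. Writing $E = \pl[G]$ for the total number of sticks of $D$ (which equals $\pl[G]$ because $D$ realizes it), it therefore suffices to establish $c(D) \le \tfrac{1}{2}\sum_d n_d(E-d-1)$.

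First I would record the two geometric facts on which everything rests. Any two sticks are straight segments in the plane, so they meet in at most one point unless they are collinear; transversality of the edges in a valid diagram rules out overlap, so two distinct sticks contribute \emph{at most one} crossing. Moreover, two sticks that share a bounding vertex cannot cross: two segments emanating from a common endpoint meet only at that endpoint, which is the image of a vertex and hence is not recorded as a crossing. This second fact is precisely where the linearity of the edges is essential, and it is the only step that is not pure bookkeeping.

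With these in hand, the local count is immediate. For an edge $e$ whose bounding vertices have degrees $a$ and $b$, the sticks that are able to cross $e$ are exactly those that are neither $e$ itself nor adjacent to $e$. The number of edges adjacent to $e$ is $d(e) = (a-1)+(b-1)$, which is the very quantity indexing $n_d$, so $e$ participates in at most $E - 1 - d(e)$ crossings.

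Finally I would sum over all edges. Each crossing is the transverse meeting of exactly two sticks, so it is counted exactly twice in $\sum_e\bigl(E - 1 - d(e)\bigr)$; dividing by two and grouping the $E$ edges according to their common value of $d$ (there being $n_d$ of each) yields $c(D) \le \tfrac{1}{2}\sum_d n_d(E-d-1)$. Combining this with $\cro[G]\le c(D)$ and $E=\pl[G]$ gives the stated inequality. The main obstacle, such as it is, is justifying that adjacent sticks never cross and that a pair of sticks crosses at most once; everything after that is a double-counting argument.
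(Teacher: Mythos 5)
Your proposal is correct and follows essentially the same argument as the paper: fix a diagram realizing $\pl[G]$, note that a stick cannot cross itself or any adjacent stick, bound the crossings on each edge by $\pl[G]-d-1$, and divide by two because each crossing involves two edges. Your version merely makes explicit the geometric facts the paper leaves implicit (two non-collinear segments meet at most once, and segments sharing an endpoint meet only there), which is a welcome sharpening rather than a different route.
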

\begin{proof}
Let $D$ be a diagram realizing $\pl[G] = n.$ Each edge $e$ cannot cross itself and any of the edges attached to the vertices bounding $e.$ Since there are $n_d$ edges each connecting two vertices summing to degree $d,$ we get the $n_d(\pl[G]-d-1)$ term. The number 2 in the denominator comes from the fact that an intersection at a crossing involves two edges, so we have counted twice for each intersection.
\end{proof}

For example, if $G$ is a bouquet graph with $7$ sticks, then $n_4=4$ and $n_2=3.$ By Theorem \ref{thm:crossing}, the crossing number is at most $\frac{4(7-5)+3(7-3)}{2}=10$. Performing a case-by-case analysis in Section \ref{section:bouquets} will allow us to bring this upper bound down to 7.
\section{Cycle graphs}\label{section:cycle}
This case coincides with planar stick number of knots. Nicholson has computed $\pl[G]$ for virtual knots with real crossing numbers at most five \cite{nicholson2011piecewise}. In this paper, we computed some planar stick number for some knots with real crossing numbers at most nine.

Our main strategy is to enumerate planar stick diagram with at most 6 sticks. In particular, we consider placement of sticks on the plane first and then adding all possible crossing information. 
We begin by introducing the following result obtained by Adams et al.

\begin{thm} \label{thm:thm1}
\cite{adams2011planar} Let \(\cro[K]\) be the crossing number of a knot \(K\) and \(\pl[K]\) the planar stick index of knot \(K\). Then
\[ \frac{3+\sqrt{9 + 8\,\cro[K]}}{2} \leq \pl[K]. \]
\end{thm}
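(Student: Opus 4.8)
The plan is to reverse the direction of the stated inequality: rather than bounding $\pl[K]$ directly, I would first prove the upper bound
\[ \cro[K] \le \frac{\pl[K]\bigl(\pl[K]-3\bigr)}{2}, \]
and then solve the resulting quadratic inequality for $\pl[K]$. The passage from this bound to the stated one is purely algebraic: writing $n=\pl[K]$ and $c=\cro[K]$, the inequality $2c \le n^2-3n$ rearranges to $n^2-3n-2c \ge 0$, whose positive root is $\tfrac{1}{2}\bigl(3+\sqrt{9+8c}\bigr)$, giving exactly the claimed lower bound once one checks that $n$ must exceed the larger root.

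To establish the upper bound on $\cro[K]$, I would fix a planar stick diagram $D$ realizing $\pl[K]=n$. Since $K$ is a knot, its underlying graph is a cycle, so $D$ is a closed polygon with $n$ straight edges $e_1,\dots,e_n$, indices read cyclically. The key combinatorial facts are that two distinct straight segments meet in at most one point, and that two edges sharing a vertex of the polygon contribute no crossing (their only common point is the shared vertex). In the cycle each edge $e_i$ has exactly two neighbors, $e_{i-1}$ and $e_{i+1}$, so $e_i$ can cross at most the remaining $n-3$ edges. Summing this bound over all $n$ edges counts every crossing twice, whence the number of crossings of $D$ is at most $\tfrac{1}{2}n(n-3)$. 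Because $\cro[K]$ is the minimum over all diagrams and $D$ is one such diagram, $\cro[K] \le \tfrac{1}{2}\pl[K]\bigl(\pl[K]-3\bigr)$, as desired. I note in passing that this is the $G=$ cycle specialization of Theorem \ref{thm:crossing}, where $n_2=n$ and $d=2$ for every edge.

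The argument is largely a double-counting estimate, so the main obstacle is not difficulty but rigor in the genericity assumptions: I must justify that the two stated facts hold for the relevant diagram. Two straight sticks can coincide along a subsegment (if collinear and overlapping) or three sticks can pass through a common point, in which case ``at most one crossing per pair'' and the crossing count could be ambiguous. I would dispose of this by a standard small perturbation of the vertex positions, which preserves the knot type and the edge count while forcing all intersections to be transverse double points with no three edges concurrent; such a perturbation can only be needed on a measure-zero set of configurations. The only other point requiring care is the inversion of the quadratic: since $n^2-3n-2c$ is an upward-opening parabola that is nonnegative at $n=\pl[K]$ and $\pl[K]$ clearly exceeds the smaller (possibly negative) root, $\pl[K]$ must be at least the larger root, which is precisely the bound to be proved.
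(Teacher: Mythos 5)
Your proof is correct and takes essentially the same route the paper indicates: it establishes the equivalent bound $\cro[K]\leq\frac{1}{2}\pl[K](\pl[K]-3)$ by the same edge-pair double-counting that proves Theorem~\ref{thm:crossing}, specialized to the cycle case ($d=2$, $n_d=\pl[K]$), and then inverts the quadratic. The added care about genericity and about $\pl[K]$ exceeding the smaller root is sound but does not change the substance of the argument.
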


In other words, $\cro[K]\leq \frac{1}{2}\pl[K](\pl[K]-3).$ This can also be obtained from Theorem \ref{thm:crossing} by noticing that $d$ is always 2 and $n_d$ is always $\pl[G]$ for any planar stick diagram. As a result, a stick planar diagram for a nontrivial knot has at least 5 sticks. 



\subsection{Planar diagrams with 5 sticks}\label{Subsection:ProofTheorem4.2}
\begin{thm} \label{thm:thm2}
The only knots with \(\pl[K] = 5\) are $3_1$ and $5_1$. 
\end{thm}
\begin{proof}

We will construct a planar diagram by placing five sticks on the plane. 
We shall assume that there is a stick that intersects with two other sticks, otherwise there will be at most $\frac{5 \times 1}{2} = 2.5$ crossings and a diagram can only give a trivial knot.

Without loss of generality, we designate one of these sticks as the first stick and position it horizontally. Subsequently, we position the second stick in an upward direction at the right endpoint. 

There are two main cases based on the intersection points of the 3rd and 4th sticks with the first stick. We will use the notation $(3 \, 4)$ to represent the case where the intersection with the 3rd stick occurs on the left side. In this particular case, the diagram is obtained as in Figure~\ref{fig:34d}.

\begin{figure}[ht!]
    \centering
    \includegraphics[scale=0.5]{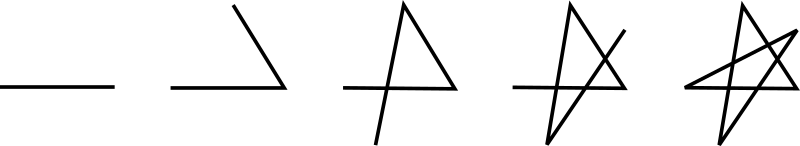}
    \caption{The diagram of the case $(3 \, 4)$.}
    \label{fig:34d}
\end{figure}
\noindent 
We may assume that the 4th should intersect with the 2nd stick to create more crossings. Alternatively, the diagram in which 4th stick and the 5th stick do not intersect the 2nd can be incorporated into the previous case using type 2 Reidemeister move as follows.
\begin{figure}[ht!]
    \centering
    \includegraphics[scale=0.5]{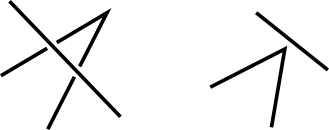}
    \caption{Type 2 Reidemeister move.}
    \label{fig:5type2}
\end{figure}

Together with the case $(4 \, 3)$, we obtain all possible planar diagrams with 5 sticks as in Figure~\ref{fig:5sticksall}.  After adding crossing information, we obtain the knots $3_1$ and $5_1$.  
\begin{figure}[ht!]
    \centering
    \includegraphics[scale=0.7]{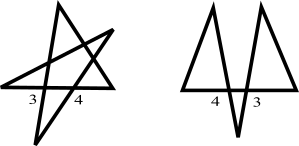}
    \caption{Planar diagrams with 5 sticks.}
    \label{fig:5sticksall}
\end{figure}

\end{proof} 

\subsection{Planar diagrams with 6 sticks}\label{subsection:ProofTheorem4.4}

As a result of Theorem~\ref{thm:thm2}, it follows that all knots except for $3_1$ and $5_1$ have  planar stick index of at least 6. We will proceed to analyze planar diagrams involving 6 sticks in a manner similar to the case with 5 sticks.

\begin{thm} \label{thm:thm4}
The only knots with \(\pl[K] = 6\) are \(4_1, 5_2, 6_1, 6_2\) and \(7_4\). 
\end{thm}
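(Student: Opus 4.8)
The plan is to extend the five-stick enumeration of Theorem~\ref{thm:thm2} to a complete census of hexagonal stick diagrams. Two bounds frame the search. First, since $3_1$ and $5_1$ are the only knots with planar stick index five, any knot with $\pl[K]=6$ is distinct from these. Second, setting $\pl[K]=6$ in Theorem~\ref{thm:thm1} (equivalently in Theorem~\ref{thm:crossing} with $d=2$ and $n_2=6$) yields $\cro[K]\le \tfrac{1}{2}\cdot 6\cdot 3 = 9$, so only knots of crossing number at most nine can occur. Geometrically this is sharp: in a closed path of six straight sticks each stick shares a vertex with two others and can therefore cross at most the remaining three, giving at most $\tfrac{6\cdot 3}{2}=9$ crossings.

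Next I would enumerate the shadow diagrams. Following the five-stick argument, I would normalize by fixing the first stick horizontally and orienting the cycle, then branch on which non-adjacent pairs of sticks intersect and in what cyclic order their crossings occur along each stick. The constraint that two straight segments meet in at most one point, together with the limited ways six segments can be placed in the plane, prunes this list sharply. Whenever a configuration contains a crossing removable by a type~1 or type~2 Reidemeister move---precisely the reduction drawn in Figure~\ref{fig:5type2}---I would absorb it into a diagram with fewer effective crossings, leaving only irreducible shadows with at least four crossings to analyze.

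For each surviving shadow I would then add every assignment of over/under information and identify the resulting knot type, matching against the knot table via a computable invariant such as the Alexander or Jones polynomial (or the tricolorability test of Section~\ref{section:background} to separate the trivial from the nontrivial). I expect this to be the crux: the number of shadow-and-crossing combinations is large, naming each diagram is error-prone bookkeeping, and deciding when two diagrams represent the same knot is delicate, so this is the natural point to bring in computer assistance, as anticipated after Problem~\ref{question:classify}.

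Finally, both halves of the statement must be settled. For the upper bound I would display one explicit six-stick diagram for each of $4_1, 5_2, 6_1, 6_2$ and $7_4$; with Theorem~\ref{thm:thm2} ruling out $\pl=5$ for these, their planar stick index is then exactly six. The completeness half---that the enumeration yields no nontrivial knot outside this list---is the genuine obstacle, since it requires certifying that every realizable hexagonal configuration has been accounted for and that no normalization or Reidemeister reduction has silently discarded a case that would have produced a new knot.
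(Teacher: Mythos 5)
Your framing is sound and your overall strategy---normalize a first stick, enumerate hexagonal shadows, add crossing information, identify knots---is the same as the paper's, but the proposal stops exactly where the proof has to happen. You yourself flag the completeness half as ``the genuine obstacle,'' and nothing in your outline resolves it: ``branch on which non-adjacent pairs of sticks intersect,'' ``prune sharply,'' and ``bring in computer assistance'' describe an open-ended search, not a finite, certified case analysis. The missing ingredient is the structural reduction that makes the enumeration tractable by hand. First, since $3_1,4_1,5_1,5_2$ exhaust all knots of crossing number at most five, and since $4_1$ and $5_2$ are realized by explicit six-stick diagrams (with Theorem~\ref{thm:thm2} giving the lower bound), completeness only requires analyzing shadows with \emph{at least six} crossings---not your threshold of four, which inflates the search space. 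Second, and crucially, the paper shows that any six-stick diagram with at least six crossings must contain a stick that crosses \emph{three} others: if every stick crossed at most two, the only way to reach six crossings is for each stick to cross exactly two others, and the resulting intersection pairings cannot be drawn with straight segments. Placing that distinguished stick horizontally reduces the entire enumeration to the $3!=6$ orderings of the intersection points of the 3rd, 4th, and 5th sticks along it; a $180^\circ$ rotation symmetry identifies two pairs of cases, one case reduces by a type~1 Reidemeister move, one yields only trivial knots, and just two essential configurations remain. Checking all crossing assignments on those two shadows produces exactly $7_4,6_1,6_2$ (beyond $4_1,5_2$), with no invariant computations or computer search needed.

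Without this reduction, your plan cannot certify that ``every realizable hexagonal configuration has been accounted for''---the very point you concede. So the gap is not in the upper-bound half (your explicit diagrams handle that, as in the paper) but in the absence of the combinatorial lemma that converts the completeness claim into finitely many drawable cases.
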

\begin{proof}

Since we can find planar diagrams with 6 sticks for the knots $4_1$ and $5_2$, we will focus on a planar diagram with at least 6 crossings.

We can assume that there is a stick that intersects with 3 other sticks. Otherwise, each stick would need to intersect with exactly 2 other sticks to create 6 crossings, resulting in 2  intersection pairings for the 6 sticks. For each intersection pairing, it can be verified that drawing the corresponding diagram is impossible.

We select a stick that intersects with 3 other sticks and designate it as the first stick, positioning it horizontally. Next, we place the second stick in an upward direction at the right endpoint. We will examine cases based on the positions of the intersection points of the 3rd, 4th, and 5th sticks with the first stick. There are $3! = 6$ cases arising from permutation of 3 letters. Similarly, we use the notation $(3 \, 4 \, 5)$ to denote the case where the intersection points are arranged on the first stick from left to right. The diagrams are shown in the following table. 


\begin{minipage}{1.0\textwidth}
    \centering
    \vspace{5mm}
    \label{tab:table2}
    \begin{tabular}{ccccccc}
        \toprule
         {Case} & {Diagram} &   & {Case} & {Diagram} \\
        \midrule
       (\(3\,4\,5\)) &	\parbox[c]{7em}{\includegraphics[scale=0.25]{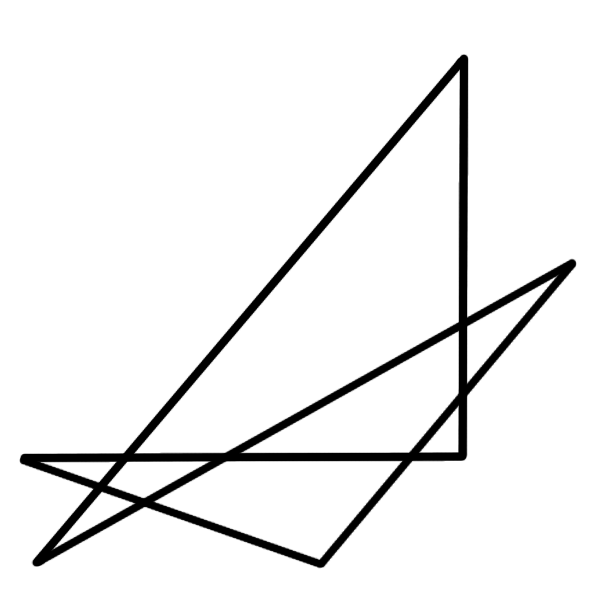}} &&	
       (\(4\,3\,5\)) &	\parbox[c]{7em}{\includegraphics[scale=0.25]{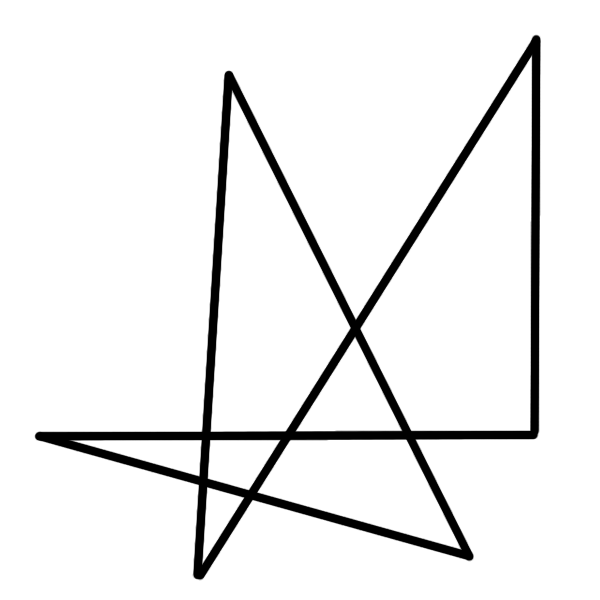}}	\\
       (\(3\,5\,4\)) &	\parbox[c]{7em}{\includegraphics[scale=0.25]{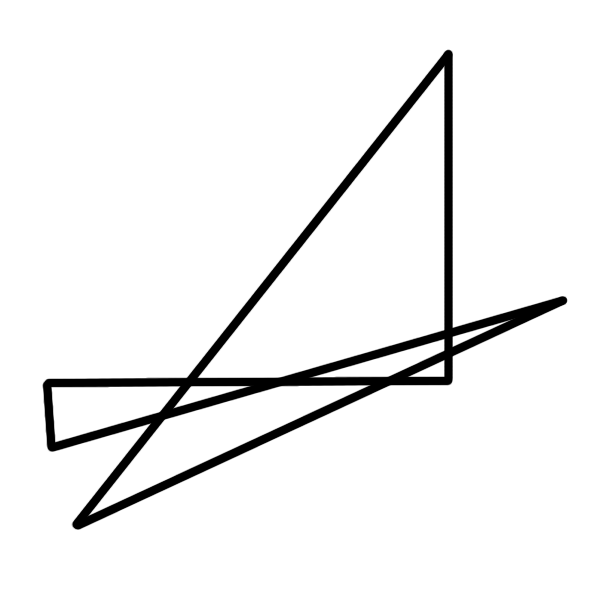}} &&
       (\(4\,5\,3\)) &	\parbox[c]{7em}{\includegraphics[scale=0.25]{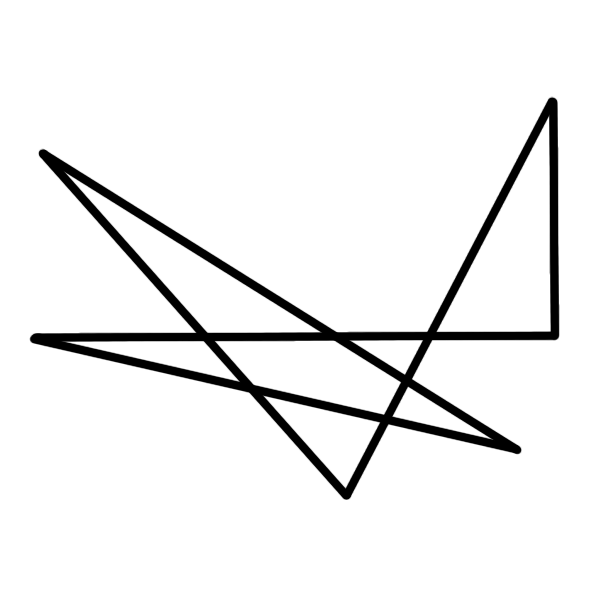}}	\\
       (\(5\,3\,4\)) &	\parbox[c]{7em}{\includegraphics[scale=0.25]{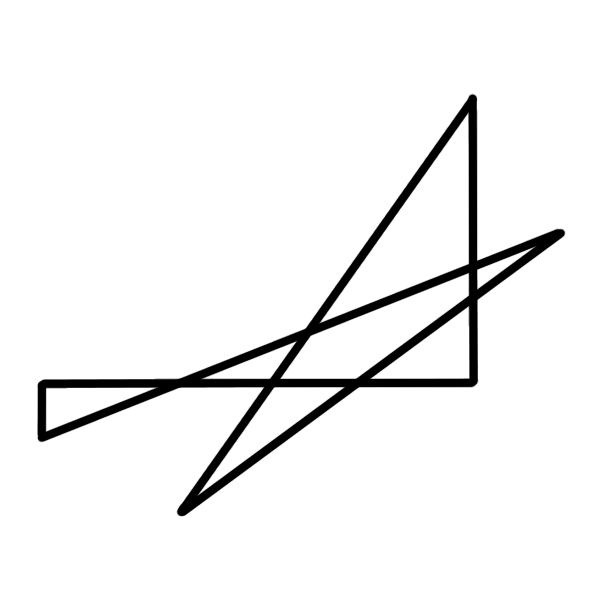}} &&
       (\(5\,4\,3\)) &	\parbox[c]{7em}{\includegraphics[scale=0.25]{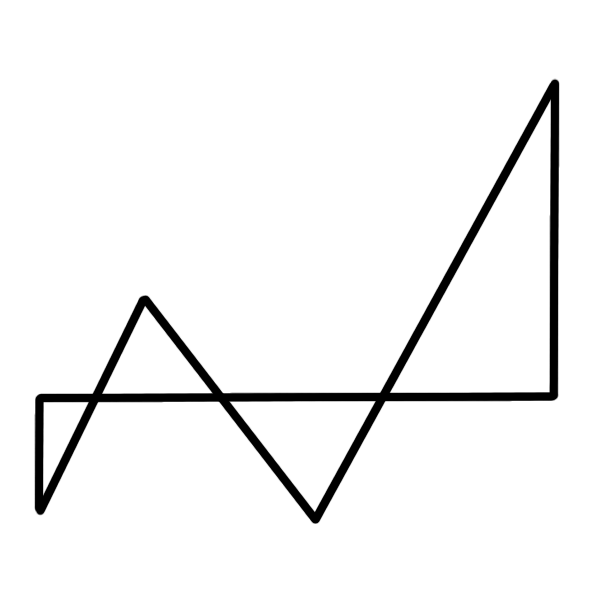}}	\\
      \\	
      \bottomrule
    \end{tabular}
\end{minipage} \\ \\

There is a symmetry among the diagrams; specifically, we can rotate a diagram by \(180^{\circ}\) and reverse the order, so that the 6th stick becomes the 2nd stick. With this operation, the diagram $(4\,3\,5)$ is equivalent to the diagram $(3\,5\,4)$ and the diagram $(4\,5\,3)$ is equivalent to the diagram $(5\,3\,4)$. Note that the diagrams $(3\,4\,5)$ and $(5\,4\,3)$ are preserved under this operation.

The diagram $(4\,5\,3)$ can be reduced to a 5-stick diagram via type 1 Reidemeister move. The diagram $(5\,4\,3)$ only gives a trivial knot diagram. Hence, we only have 2 cases to consider.

\textbf{Case $(3\,4\,5)$}: With crossings added, we obtain the following knots: $7_4, 6_1, 5_2,$ and $4_1$ 
\begin{figure}[htbp]
    \centering
    \includegraphics[scale=0.75]{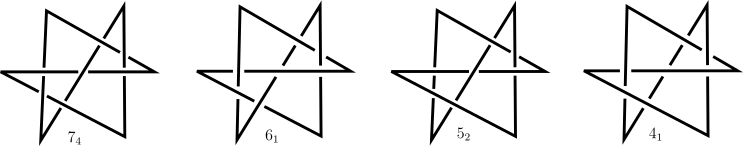}
    \label{fig:345a}
\end{figure}

\textbf{Case $(4\,3\,5)$}: With crossings added, we obtain the following knots: $6_2, 5_2,$ and $4_1$ 
\begin{figure}[htbp]
    \centering
    \includegraphics[scale=0.75]{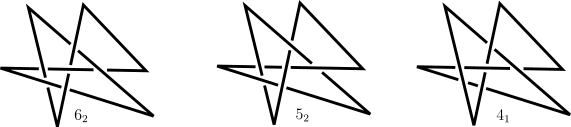}
    \label{fig:345a}
\end{figure}

\end{proof}

\subsection{Identifying planar stick indices of small knots}

Theorem~\ref{thm:thm2} and Theorem~\ref{thm:thm4} have a straightforward consequence.

\begin{cor}
The planar stick index of knots other than \(3_1, 4_1, 5_1, 5_2, 6_1, 6_2,\) and \(7_4\) must be at least \(7\).
\end{cor}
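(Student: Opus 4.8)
The plan is to establish this by contraposition, showing that any nontrivial knot with planar stick index at most $6$ must be one of the seven listed knots. Since the two preceding classification theorems already do all the real work, the corollary reduces to assembling them together with the lower bound $\pl[K]\ge 5$ for nontrivial knots.

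First I would recall the lower bound on the number of sticks: every nontrivial knot has crossing number at least $3$, and substituting $\cro[K]\ge 3$ into the inequality $\cro[K]\le\frac{1}{2}\pl[K](\pl[K]-3)$ from Theorem~\ref{thm:thm1} forces $\pl[K]\ge 5$. Hence any nontrivial knot $K$ with $\pl[K]\le 6$ satisfies $\pl[K]\in\{5,6\}$. If $\pl[K]=5$, then Theorem~\ref{thm:thm2} gives $K\in\{3_1,5_1\}$; if $\pl[K]=6$, then Theorem~\ref{thm:thm4} gives $K\in\{4_1,5_2,6_1,6_2,7_4\}$. Taking the union, every nontrivial knot with $\pl[K]\le 6$ lies in the set $\{3_1,4_1,5_1,5_2,6_1,6_2,7_4\}$, which is precisely the excluded list. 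Contraposing yields $\pl[K]\ge 7$ for every knot outside this list.

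There is no genuine obstacle at this stage: the argument is a purely logical consequence of two exhaustive classifications, and all the difficulty resides in the case analyses carried out to prove Theorems~\ref{thm:thm2} and~\ref{thm:thm4}. The only subtlety worth recording is that the statement is understood for nontrivial knots, since the unknot is realized by a triangle and therefore has planar stick index $3$, placing it below the claimed bound without contradicting it.
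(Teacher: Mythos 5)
Your proposal is correct and follows exactly the paper's route: the paper states this corollary as a ``straightforward consequence'' of Theorem~\ref{thm:thm2} and Theorem~\ref{thm:thm4}, together with the bound $\pl[K]\geq 5$ for nontrivial knots derived from Theorem~\ref{thm:thm1}, which is precisely the contrapositive assembly you carried out. Your remark that the unknot (with $\pl=3$) is implicitly excluded is a fair point of hygiene that the paper leaves unstated.
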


By experiment, we try to find 7-stick planar diagrams of the rest knots. As a result, we can determine planar stick index of knots up to 7 crossings. We also find many knots with planar stick index 7. See the table in Appendix \ref{appendix}.


\subsubsection{More knots with 7 planar sticks}

There are more knot types possessing planar stick diagrams with seven sticks displayed in the table. For instance, knots $7_1,7_3,$ and $ 8_2$ have the same planar stick diagrams in the table differing only in the crossing information.

We use SnapPy \cite{SnapPy} to obtain PD code for each planar stick diagram in the table, then use a Python code to permute the PD codes. This results in all possible knots possessing a fix planar stick diagram. The code is available upon request. The result is summarized as follows. We will only apply the code to the diagrams with at least 8 crossings since we know the planar stick numbers of all 7 crossing knots. We follow the convention on KnotInfo.

\begin{enumerate}
\item The diagram for $8_{19}$ in the table gives $8_{10},8_{16},8_{21}, 10_{100}, 10_{124},  10_{125},10_{141},10_{143}, 10_{155}$.

These knots come from permuting the PD code 
\begin{flalign*} 
& [(2,9,3,10),(3,11,4,10),(20,13,1,14),(7,12,8,13),(1,15,2,14), && \\ &(4,17,5,18),(8,16,9,15),(11,16,12,17),(5,19,6,18),(6,19,7,20)] .  &&
\end{flalign*}
\begin{figure}[htbp]
    \centering
    \includegraphics[scale=0.75]{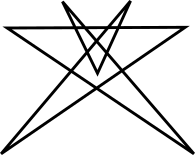}
    \label{fig:78_19}
\end{figure}

    \item The shadow of the diagram for $9_{26}$ in the table gives knots 
$0_1, 3_1, 4_1, 5_1, 5_2, 6_1, 6_2, 6_3, 7_1, \\ 7_2,  7_3, 7_5, 7_6, 7_7, 8_2, 8_4, 8_6, 8_7, 8_8, 8_9, 8_{14}, 9_1, 9_6, 9_7, 9_9, 9_{20}, 9_{23}, 9_{26}, 10_9, 10_{22}, 10_{32}, K11a306
$ .

These knots come from permuting the PD code 
\begin{flalign*} 
& [(21,9,22,8),(18,9,19,10),(11,16,12,17),(7,21,8,20),(19,7,20,6),
(5,15,6,14), && \\ &(13,5,14,4), (22,18,1,17),(15,3,16,2),(3,13,4,12),(1,10,2,11)]  . &&
\end{flalign*}
\begin{figure}[htbp]
    \centering
    \includegraphics[scale=0.75]{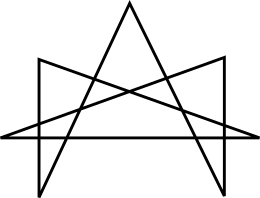}
    \label{fig:79_26}
\end{figure}

\item Unlike the case with 6 sticks, it is possible to draw a diagram that attains maximal crossing number $\frac{7 \times 4}{2}= 14$. One of the diagrams is shown below with the PD code:
\begin{flalign*} 
& [(15,24,16,25), (12,22,13,21), (22,4,23,3), (18,9,19,10), (19,28,20,1), (16,8,17,7),  && \\&(17,27,18,26), (20,12,21,11), (13,3,14,2), (25,7,26,6), (23,4,24,5), (1,11,2,10), && \\&(14,5,15,6), (27,8,28,9)] .  &&
\end{flalign*}
\begin{figure}[htbp]
    \centering
    \includegraphics[scale=0.75]{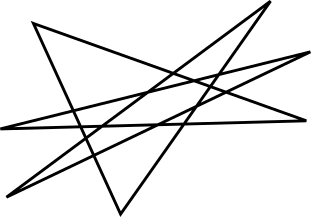}
    \label{fig:714}
\end{figure}
We obtain the following knots from permuting the PD code:
\begin{flalign*} 
& 0_1, 3_1, 4_1, 5_1, 5_2, 6_2, 6_3, 7_1, 7_3, 7_5, 8_2, 8_5, 8_7, 8_9, 8_{10}, 8_{16}, 8_{17}, 8_{18}, 8_{19}, 8_{20},  8_{21}, 9_9, 9_{16}, 10_9, &&\\ & 10_{17}, 10_{48}, 10_{64}, 10_{79}, 10_{82}, 10_{85}, 10_{91}, 10_{94}, 10_{99}, 10_{100}, 10_{104}, 10_{106}, 10_{109}, 10_{112}, 10_{116},&&\\& 10_{118},  10_{124}, 10_{127}, 10_{139}, 10_{141}, 10_{143}, 10_{148}, 10_{149}, 10_{152}, 10_{155}, 10_{157}, 10_{159}, 10_{161}, &&
\end{flalign*}
K12n242, K12n344, K12n467, K12n468, K12n571, K12n708, K12n721, K12n725, K12n747, K12n748, K12n749, K12n751, K12n767, K12n821, K12n829, K12n830, K12n831, K12a819, K12a864, K12a1002, K12a1013, K12a1209, K12a1211, K12a1219, K12a1221, K12a1226, K12a1230, K12a1248, K12a1253, K14n20437, K14n21192, K14n21881, K14n21882, K14n21884, K14n22339, K14n22344, K14n23999, K14n24169, K14n24767, K14n27039, K14n27117, K14n27120, K14n27123, K14n27133, K14n27154, K14a19475.
\end{enumerate}

\section{Bouquet graphs}\label{section:bouquets}
We often make use of the enumeration result by Oyamaguchi \cite{oyamaguchi2015enumeration}. We remind the readers that this enumeration is up to rigid vertex equivalence. We begin by computing planar stick indices of bouquet graphs with low crossing numbers.
\subsection{Computations for low crossing bouquet graphs}

The first lemma characterizes planar stick diagrams with six sticks.
\begin{lem}
There are three types of planar stick diagrams of a bouquet graph such that $\pl[D]=6$. \label{lem:bouquetpl6}
\end{lem}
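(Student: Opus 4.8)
The plan is to first pin down the combinatorial skeleton of any six-stick planar diagram of a bouquet graph, and only then enumerate the finitely many geometric arrangements. A bouquet graph has a single vertex $V$, and each loop is a cycle based at $V$; since a straight-stick loop cannot be a monogon or a bigon, every loop needs at least three sticks, and more than two loops would already force at least nine sticks. Hence with only six sticks at our disposal the graph must be the bouquet $B_2$ with a $4$-valent central vertex, realized as exactly two triangular loops $T_1 = VAB$ and $T_2 = VCD$ sharing $V$. In the notation preceding Theorem \ref{thm:crossing} this is the case $n_4 = 4$, $n_2 = 2$: the four sticks incident to $V$ carry $d = 4$, while the two ``far'' edges $AB$ and $CD$ carry $d = 2$. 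I would record this as the first step, since it reduces the whole problem to classifying unordered pairs of triangles in the plane that meet in a single common vertex.

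The second step is to set up a reference frame exactly as in the proofs of Theorem \ref{thm:thm2} and Theorem \ref{thm:thm4}: place $V$ at the origin and fix the triangle $T_1$ in standard position, so that its edges $VA, VB$ at $V$ bound a distinguished angular sector (the ``cone'' of $T_1$ at $V$). All remaining freedom lies in the placement of the two rays $VC, VD$ of $T_2$ and of its far edge $CD$. I would organize the enumeration by how many of $VC, VD$ point into the cone of $T_1$ (both, one, or neither), and, within each case, by the position of $CD$, which governs which of the at most five potential crossings $VA\times CD$, $VB\times CD$, $AB\times VC$, $AB\times VD$, $AB\times CD$ actually occur as $C$ and $D$ slide along their rays. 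The bound of five crossings is consistent with Theorem \ref{thm:crossing}, which for this skeleton gives $\cro \le \tfrac{4\cdot 1 + 2\cdot 3}{2} = 5$.

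The third step is to collapse redundancies and discard degenerate configurations. The relevant symmetries are the $180^\circ$ rotation together with the relabeling used in the six-stick knot argument, the reflection of the plane, the interchange $T_1 \leftrightarrow T_2$ of the two triangles, and the interchange of the two non-central vertices inside a triangle. A point to watch is that the swap $T_1 \leftrightarrow T_2$ identifies the ``both rays inside the cone'' configuration with the ``neither ray inside the cone'' configuration, so these do not contribute two separate types. I would then remove any arrangement reducible to a five-stick diagram by a type 1 or type 2 Reidemeister move (for instance, when $T_2$ can simply be pulled off of $T_1$), exactly as the cases $(4\,5\,3)$ and $(5\,4\,3)$ were discarded earlier. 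After these identifications and prunings I expect precisely three surviving shadows, which I would display in a figure using the same tabular format as before.

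The step I expect to be the main obstacle is the interface between the combinatorial enumeration and straight-line realizability, compounded by the symmetry bookkeeping just mentioned. Not every abstract assignment of crossings to the pairs above can be drawn with genuine straight segments: the convexity of each triangle and the fact that $VA, VB, VC, VD$ all emanate from the single point $V$ impose strong constraints on which far-edge crossings can coexist, so a purely combinatorial count will over-produce cases. I would therefore certify each of the three types by an explicit coordinate realization and, conversely, rule out the spurious configurations by a short convexity-and-ordering argument showing they cannot be straightened, rather than relying on a picture alone. Making this realizability check airtight while ensuring that no case is double-counted under the symmetry group above — in particular the $T_1 \leftrightarrow T_2$ identification — is the crux of the lemma.
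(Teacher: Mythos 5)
Your setup and case decomposition are essentially the paper's own: with six sticks the diagram is two triangular loops sharing the $4$-valent vertex, the only admissible crossings are the five pairs you list, and your trichotomy on how the rays $VC,VD$ leave $V$ relative to the cone of $T_1$ is exactly the paper's Case 1/Case 2/Case 3 division. The convexity facts you flag as the crux (a ray leaving $V$ outside the cone never re-enters the triangle, a segment between two interior points stays inside, a stick meets a triangle at most twice) are precisely what the paper's proof uses, and your remark that the $T_1\leftrightarrow T_2$ swap exchanges ``both rays inside'' with ``neither ray inside'' correctly explains why those two cases produce the same types.

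The genuine gap is in your pruning step, and it is what your expected count of ``precisely three surviving shadows'' rests on. No two-loop bouquet graph admits a five-stick planar diagram at all (each loop needs three sticks), so nothing in this enumeration is ``reducible to a five-stick diagram'': in particular the configuration you propose to discard, where $T_2$ pulls off of $T_1$, is a legitimate type --- the paper's Type B0 --- and it is needed downstream, since Theorem \ref{thm:lowsticks}(1) obtains the trivial bouquet $0_1^k$ (whose planar stick index is exactly six) from precisely this crossingless diagram. Moreover R1 can never apply here (each loop is an embedded triangle, so no graph edge crosses itself), and R2-reducibility is a property of a choice of crossing information rather than of a shadow: the one-crossing and two-crossing shadows yield the nontrivial rigid-vertex graphs $1_1^l$ and $2_1^k$ under suitable crossing assignments, so they cannot be pruned either. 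Applied honestly, your criterion prunes nothing, and the enumeration closes with the paper's four types B0--B3, not three (the ``three'' in the statement of Lemma \ref{lem:bouquetpl6} is in tension with the paper's own proof, which exhibits four; at best it counts only the types carrying crossings). There is a second, related gap: your symmetry group alone will not collapse the two-crossing shadows into one class. The shadow in which both rays $VC,VD$ cross $AB$ (the paper's Subcase 1.3) is not related by any rotation, reflection, or triangle/vertex swap to the shadow in which one ray and the far edge $CD$ cross $T_1$, because any symmetry must preserve incidence with the $4$-valent vertex; the paper identifies these two shadows only by checking that every crossing assignment produces the same bouquet graphs. Without this identification-by-output step, which your plan omits, a classification of shadows up to symmetry terminates with strictly more classes than the lemma asserts.
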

\begin{proof}
Consider a triangle $\Delta$, which is one of the two loops of the bouquet graph $G$ made up of three edges $\{e_1,e_2,e_3\}$. Call the 4-valent vertex $x$ and let $e_3$ be the edge not connected to $x$. The triangle $\Delta$ cuts the plane into two regions. We call the region with bounded area the inside and the complementary region the outside of $\Delta$. Let $e_4,e_5$ be the remaining two edges connecting to $x$. 

\textbf{Case 1}: Suppose that as $e_4$ and $e_5$ emerge from $x$, they are on the inside of $\Delta$.

\textbf{Subcase 1.1} If $e_4$ and $e_5$ are disjoint from $\Delta,$ we get the planar stick diagram of Type B0.

\textbf{Subcase 1.2} Suppose that one of $e_4$ or $e_5$ intersects $\Delta$. Since each edge is straight, this forces such an edge to intersect $e_3$. All the possibilities lead to the planar stick diagram of Type B1.

\textbf{Subcase 1.3} Suppose that both of $e_4$ or $e_5$ intersects $\Delta$. Since each edge is straight, both edges intersect $e_3$. However, all the ways of adding the crossing information to this subcase give the same bouquet graphs as the results of adding crossing information to the planar stick diagram of Type B1.

\textbf{Case 2}: Suppose as $e_4$ and $e_5$ emerge from $x$, they are on the outside of $\Delta$. This forces $e_4$ and $e_5$ to be disjoint from $e_1$ and $e_2.$ If $e_6$ is disjoint from $\Delta,$ we get the planar stick diagram of Type B0. The other possibility left is $e_6$ intersects $\Delta$ twice and we get the planar stick diagram of Type B1.

\textbf{Case 3}: As $e_4$ and $e_5$ exits $x$, they are on opposite sides of $\Delta.$ Say $e_4$ is inside $\Delta$ as it leaves $x.$

\textbf{Subcase 3.1} Suppose the whole edge $e_4$ is completely inside $\Delta.$ Then, we get a Type B2 planar stick diagram because the sixth edge $e_6$ has to intersect $\Delta$ once to go inside of $\Delta$ to connect with $e_4.$

\textbf{Subcase 3.2} Suppose a part of $e_4$ lies outside of $\Delta.$ Then, we can get a Type B2 planar stick diagram if $e_6$ is disjoint from $\Delta$ or we get a Type B3 planar stick diagram if $e_6$ intersects $e_3$ twice and either $e_1$ or $e_2$ once.
\end{proof}

\begin{figure}
     \centering
     \begin{subfigure}[b]{0.3\textwidth}
         \centering
         \includegraphics[width=\textwidth]{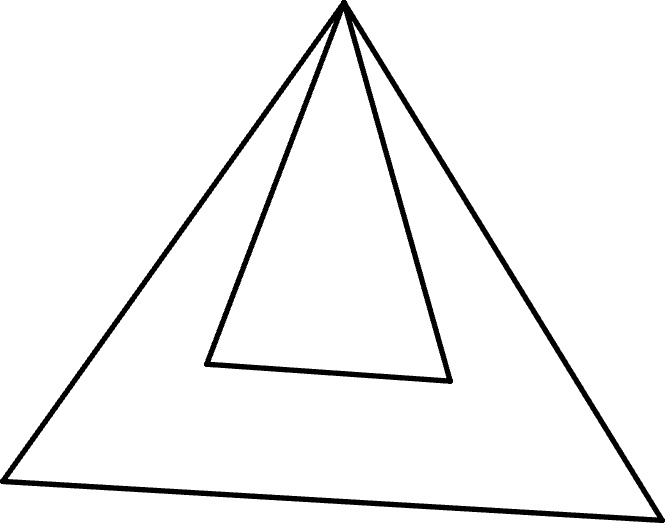}
         \caption{A type B0 planar stick diagram}
         \label{fig:b0}
     \end{subfigure}
     \begin{subfigure}[b]{0.3\textwidth}
         \centering
         \includegraphics[width=\textwidth]{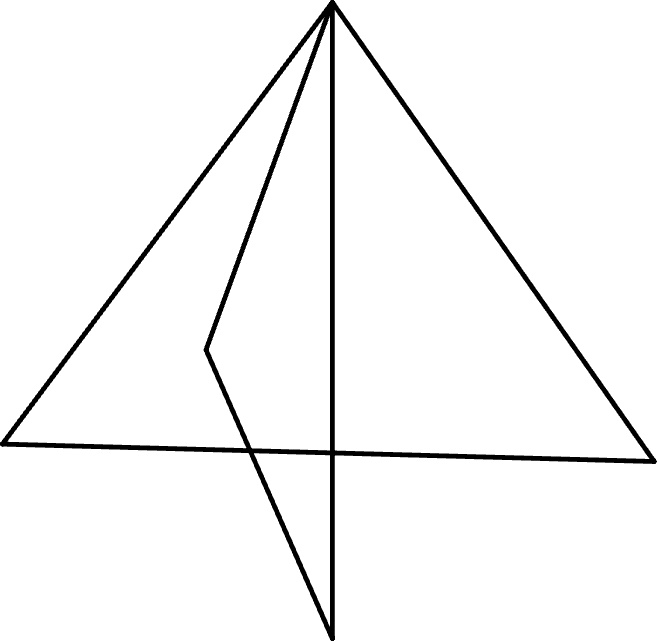}
         \caption{A type B1 planar stick diagram}
         \label{fig:b2}
     \end{subfigure}
        \label{fig:b0yb2}
\end{figure}

\begin{figure}
     \centering
     \begin{subfigure}[b]{0.3\textwidth}
         \centering
         \includegraphics[width=\textwidth]{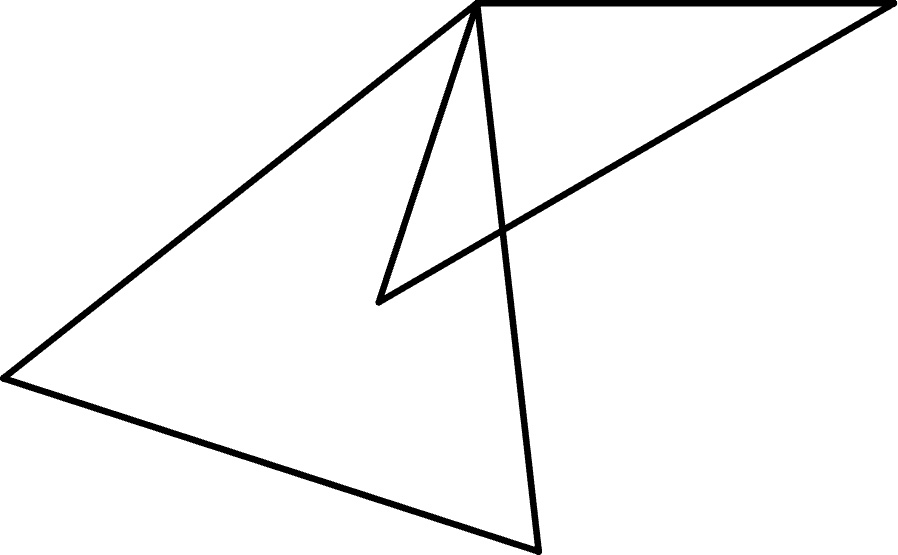}
         \caption{A type B2 planar stick diagram}
         \label{fig:b1}
     \end{subfigure}
     \begin{subfigure}[b]{0.22\textwidth}
         \centering
         \includegraphics[width=\textwidth]{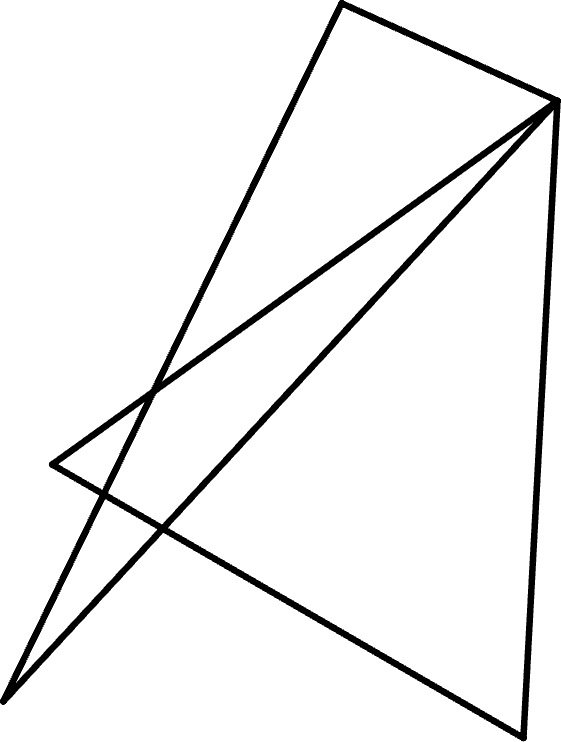}
         \caption{A type B3 planar stick diagram}
         \label{fig:b3}
     \end{subfigure}
        \label{fig:b1yb3}
\end{figure}

\begin{figure}
    \centering
    \includegraphics[width=0.5\textwidth]{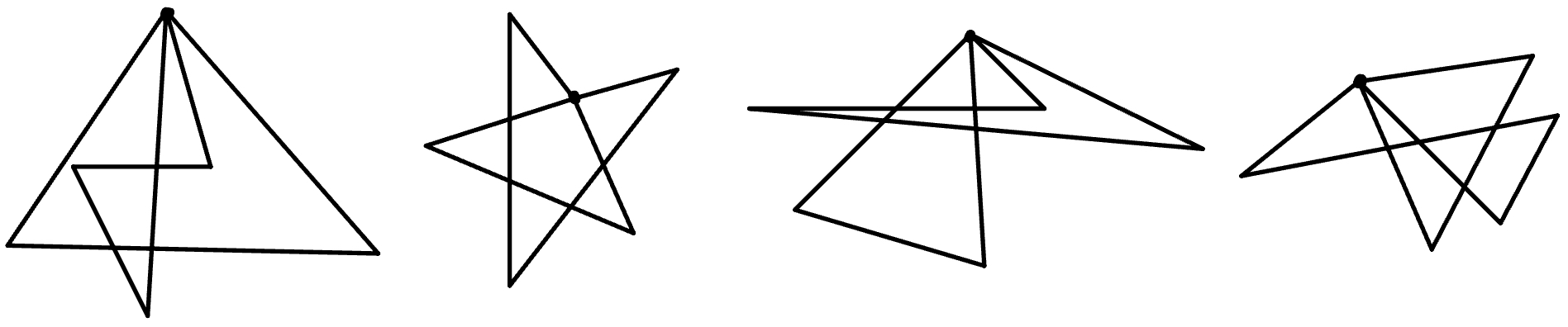}
    \caption{From left to right, planar stick diagrams of $3_1^k,4_1^k,4_2^k,4_1^l$ with seven sticks.}
    \label{fig:4crossings}
\end{figure}
\begin{figure}
    \centering
    \includegraphics[width=0.5\textwidth]{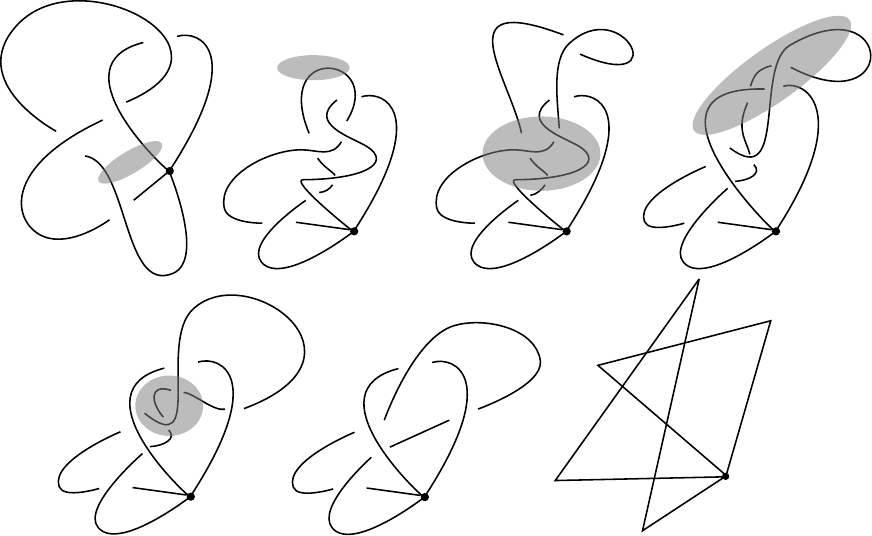}
    \caption{The graph $4_3^k$ has planar stick index 7. The top left image depicts the diagram in Oyamaguchi's paper. The lower right picture is a planar stick diagram. The images in between show how to use Reidemeister moves to go to the spatial graph diagram corresponding to the stick diagram. If a gray oval intersects the diagram in $k$ strands, it means that the next diagram differs from the current diagram by a Reidemeister (R$k$) move, where $k \in \{1,2,3\}$ (see Figure \ref{fig:greid}). Here, the last image of the top row is followed by the first image of the bottom row.}
    \label{fig:4k3}
\end{figure}
We will often make use of the straightforward observation.

\begin{obs}\label{lem:intersect}
    Suppose that $D$ is a planar stick diagram with 7 sticks that realizes $\pl[G]$. Let $\Delta$ be the triangle made up of three sticks. For an edge connecting $e$ two vertices $v$ and $w,$ the intersection numbers of $e$ with $\Delta$ is $|e\cap \Delta|=1$ if $v$ and $w$ lie on opposite sides of $\Delta$. If $v$ and $w$ both lie inside of $\Delta$, then $|e\cap \Delta|=0$. Lastly, if $v$ and $w$ both lie outside of $\Delta$, then $|e\cap \Delta|=0$ or 2.
\end{obs}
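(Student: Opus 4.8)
The plan is to reduce everything to the convexity of the region bounded by $\Delta$. Since $\Delta$ is a genuine (non-degenerate) triangle formed by three straight sticks, it is a simple closed curve, so by the Jordan curve theorem it separates the plane into a bounded inside and an unbounded outside. More importantly, the closed region $R$ consisting of $\Delta$ together with its inside is convex: it is the intersection of the three closed half-planes cut out by the lines through the three sides, and its open interior is likewise convex. The edge $e$ is itself a straight stick, hence $e = \overline{vw}$ is the convex hull of its two endpoints.

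The key step is the remark that $e \cap R$ is a connected sub-segment of $e$ (possibly empty or a single point), because the intersection of the two convex sets $e$ and $R$ is convex, and a convex subset of the line carrying $e$ is an interval. From this the three cases drop out. If $v$ and $w$ both lie in the open inside, then by convexity of the open interior the whole segment stays in the open inside, so $e$ meets $\Delta$ in no point and $|e \cap \Delta| = 0$. If $v$ is inside and $w$ is outside, then the interval $e \cap R$ contains $v$ but not $w$, so it has exactly one endpoint in the interior of $e$; that endpoint is the unique point where $e$ crosses $\Delta$, giving $|e \cap \Delta| = 1$. Finally, if $v$ and $w$ both lie in the open outside, then $e \cap R$ is either empty, in which case $e$ avoids $\Delta$ entirely, or it is a proper sub-interval with both endpoints interior to $e$, meaning the stick enters and then leaves $R$ and therefore meets $\Delta$ in exactly two points; hence $|e \cap \Delta| \in \{0, 2\}$.

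The one point I would be careful about, and the only place where the hypotheses on $D$ are actually used, is the passage from ``the interval $e \cap R$ has an interior endpoint'' to ``$e$ crosses $\Delta$ transversely at that endpoint.'' I must rule out the degenerate possibilities that $e$ passes through a vertex of $\Delta$, that $e$ is collinear with one of the sides, or that $e$ is tangent to $R$, touching $\Delta$ in a single point without crossing. Each of these would corrupt the crossing count, but all are excluded by the definition of a spatial graph diagram, which requires that a vertex never lie in the interior of another edge and that edges meet transversely; if necessary this can be arranged by an arbitrarily small perturbation preserving both the linearity of the sticks and the combinatorial type of $D$. With transversality in hand, each interior endpoint of $e \cap R$ is an honest double point, and the counts above are exactly $0$, $1$, and $0$ or $2$ as claimed.
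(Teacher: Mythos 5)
Your proof is correct. Note that the paper offers no proof at all for this statement---it is presented as a ``straightforward observation''---so there is nothing to compare against; your convexity argument (the closed triangular region is an intersection of three half-planes, hence its intersection with the straight stick $e$ is a subinterval of $e$, which immediately yields the counts $0$, $1$, and $0$ or $2$) is precisely the justification the paper implicitly takes for granted. Your final paragraph ruling out the degenerate configurations (the stick passing through a vertex of $\Delta$, running collinear with a side, or meeting it tangentially) via the transversality and vertex conditions in the definition of a spatial graph diagram is a worthwhile addition, since those are exactly the cases in which the stated intersection counts would fail.
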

\begin{thm}
   If $\pl[G]=7$, then $\cro[G]\leq 7$. Furthermore, planar stick diagrams with 7 sticks are among diagrams C1-C10, C9$'$,C10$'$, D, and E. \label{thm:ravelbouquet}
\end{thm}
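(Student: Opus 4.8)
The plan is to fix the triangle $\Delta=\{e_1,e_2,e_3\}$ coming from the short loop (with $e_1,e_2$ meeting the $4$-valent vertex $x$ and $e_3$ the opposite edge) and to analyze how the remaining loop sits relative to $\Delta$. Since a straight stick has two distinct endpoints and two sticks sharing an endpoint meet only there, the two loops can have sizes $(a,b)$ with $a+b=7$ and $a,b\ge 3$ only, forcing the second loop to consist of exactly four edges $e_4=xw_1,\ e_5=w_1w_2,\ e_6=w_2w_3,\ e_7=w_3x$. The only possible crossings are then the two self-crossings of this loop, between the non-adjacent pairs $(e_4,e_6)$ and $(e_5,e_7)$, together with crossings of $e_4,\dots,e_7$ against $\Delta$. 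Because $e_4$ and $e_7$ emanate from the vertex $x\in\Delta$ they cannot cross $e_1$ or $e_2$, so each meets $\Delta$ at most once (in $e_3$), while by Observation~\ref{lem:intersect} each middle edge $e_5,e_6$ meets $\Delta$ in $0$, $1$, or $2$ points. This caps the crossing count and organizes the search.

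First I would run the trichotomy of Lemma~\ref{lem:bouquetpl6}, splitting on whether $e_4,e_7$ both leave $x$ into the inside of $\Delta$, both into the outside, or onto opposite sides, and then subdivide each branch according to where $w_1,w_2,w_3$ lie relative to $\Delta$, reading off the forced values of $|e_i\cap\Delta|$ from Observation~\ref{lem:intersect}. Each combinatorially admissible configuration yields a stick shadow; attaching all consistent crossing assignments should produce exactly the list C1--C10, C9$'$, C10$'$, D, E, where D and E are the configurations attaining the maximal crossing count. A direct count over this finite list then shows that none carries more than seven crossings, and since a planar stick diagram realizing $\pl[G]=7$ is itself a diagram of $G$, this yields $\cro[G]\le 7$.

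To keep the enumeration short I would use two reductions. The reflection of the four-edge loop swapping $e_4\leftrightarrow e_7$, $e_5\leftrightarrow e_6$, $w_1\leftrightarrow w_3$ identifies mirror-symmetric configurations (this is the symmetry that should pair C9 with C9$'$ and C10 with C10$'$), and Reidemeister moves R1, R2 let me discard or merge cases: any configuration in which a stick cuts off a monogon is reducible by R1, and since such a reduction drops the stick count below seven it cannot realize $\pl[G]=7$ and is excluded, while pairs of R2-cancelling intersections merge a configuration into one already listed.

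The main obstacle is not the bookkeeping but the \emph{realizability} of configurations under the straightness constraint. The intersection bounds above already improve the crude estimate of Theorem~\ref{thm:crossing} (which gives $\tfrac{4\cdot 2+3\cdot 4}{2}=10$) to a naive maximum of $1+1+2+2+1+1=8$, so the final improvement to $7$ must come from showing that the extremal pattern is geometrically impossible for straight segments. Concretely, I expect the hardest step to be proving that straight edges $e_5$ and $e_6$ cannot each puncture $\Delta$ twice while the loop also realizes both self-crossings $(e_4,e_6)$ and $(e_5,e_7)$, and more generally pruning every configuration that survives the intersection count but violates the convexity and betweenness constraints forced by collinear stick endpoints. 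Establishing these impossibilities case by case, and then verifying that the surviving shadows are genuinely distinct up to the reflection and the R1/R2 reductions, is where the real work lies.
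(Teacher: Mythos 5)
Your setup is the same as the paper's: one loop of the bouquet must be a triangle $\Delta$ and the other a quadrilateral, and the paper runs exactly your trichotomy (the two loop-edges at $x$ leave into the inside of $\Delta$, both into the outside, or onto opposite sides), using Observation~\ref{lem:intersect} to cap intersection numbers and reading off the types C1--C10, C9$'$, C10$'$, D, E subcase by subcase. The problem is that your proposal stops short of a proof at precisely the two points the theorem asserts. For the bound $\cro[G]\leq 7$, your count tops out at $1+1+2+2+1+1=8$, and you defer the reduction from $8$ to $7$ to unspecified case-by-case impossibility arguments, explicitly calling them ``where the real work lies''; that deferred step \emph{is} the content of the inequality, so as written nothing is established. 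For the classification statement, the claim that attaching crossings to the admissible configurations ``should produce exactly the list'' is asserted rather than derived; the paper's proof consists exactly of carrying out that enumeration (its Subcases 1.1--1.3, Case 2, and Subcases 3.1--3.2, with the figures exhibiting each type).

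There is also a missed elementary fact that would have closed your gap in the inequality immediately: a straight-edged quadrilateral in the plane has at most \emph{one} self-intersection, so the two non-adjacent pairs $(e_4,e_6)$ and $(e_5,e_7)$ can never both cross. (Four points in general position are either in convex position, in which case the closed 4-gon through them is convex or a bowtie with one crossing, or one point lies inside the triangle of the other three, in which case the 4-gon is embedded.) With that, your own intersection bounds give $\cro[G]\leq 1+1+1+2+2=7$ outright, and the ``extremal pattern'' you single out as the hardest obstacle does not exist. The case analysis is then needed only for the second half of the theorem, the list of diagram types, which still has to be executed as the paper does.
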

\begin{proof}
  Take such a planar stick diagram $D$. This means that a cycle of the bouquet graph is made up of three sticks, and the other cycle is made up of four sticks.

    Consider a triangle $\Delta$, which is one of the two loops of the bouquet graph $G$ made up of three edges $\{e_1,e_2,e_3\}$. Call the 4-valent vertex $x$ and let $e_3$ be the edge not connected to $x$. The triangle $\Delta$ cuts the plane into two regions. We call the region with bounded area the inside and the complementary region the outside of $\Delta$. Let $e_4,e_5$ be the remaining two edges connecting to $x$. 

\textbf{Case 1}: Suppose that as $e_4$ and $e_5$ emerge from $x$, they are on the inside of $\Delta$.

\textbf{Subcase 1.1} Suppose $e_4$ and $e_5$ are disjoint from $\Delta.$

By Observation \ref{lem:intersect}, $e_4$ and $e_5$ do not contribute to the crossing number of $D$. The edge $e_6$ and $e_7$ can intersect $\Delta$ at most once each. But $e_6$ has to be connected to $e_7$, so they intersect the same edge of $\Delta.$ The cycle made up of 4 edges can have at most one self-intersection. Thus, the crossing number is at most three in this case. 

\textbf{Subcase 1.2} Suppose that one of $\{ e_4,e_5\}$ intersects $\Delta$.
Say, $e_5$ intersects $\Delta$ and $e_6$ connects to $e_5.$ We want to create diagrams with as many crossings as possible. This forces $e_6$ to intersect $\Delta$ twice. After this, there is a unique choice for $e_7.$ All the possible diagrams are shown in Figure \ref{fig:c1c2c3}. In this case, the crossing number is at most five.

\textbf{Subcase 1.3} Suppose that $e_4,e_5$ both intersect $\Delta$.

We can break into further cases. For the configurations where both $e_6$ and $e_7$ only hit $e_3$, we get the planar stick diagrams as in Figure \ref{fig:onlyhite3}. For the configurations where $e_6$ hits two edges of $\Delta$, but $e_7$ is disjoint from $\Delta$, we get the planar stick diagrams in Figure \ref{fig: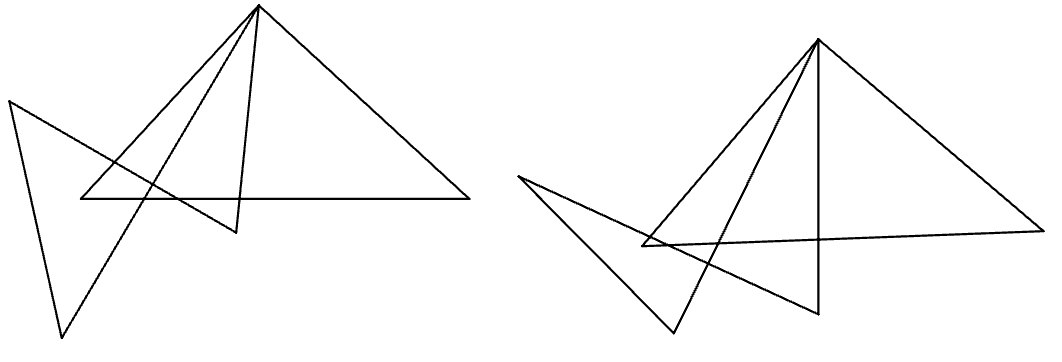}. Lastly, if $e_6$ and $e_7$ both hit two edges, then we get planar stick diagrams in Figure \ref{fig:e6e7bothhit2}. For all these cases, the crossing number does not exceed seven.

\textbf{Case 2}: As $e_4$ and $e_5$ exits $x$, they are both on the outside of $\Delta.$

In this case, the crossing number is at most five. To see this, note that to create the maximum number of crossings, each of $e_6,e_7$ crosses $\Delta$ twice. If $e_6$ also intersects $e_4$ then the crossing number is five (see Figure \ref{fig:typeD} for an example).

\textbf{Case 3}: As $e_4$ and $e_5$ exits $x$, they are on opposite sides of $\Delta.$ Say $e_4$ is inside $\Delta$ as it leaves $x.$

\textbf{Subcase 3.1} Suppose the whole edge $e_4$ is completely inside $\Delta.$ At most, $e_6$ can intersect $\Delta \cup e_4$ in three points. After this, there is a unique way to draw $e_7$ and the diagram has at most 4 crossings.

\textbf{Subcase 3.2} Suppose a part of $e_4$ lies outside of $\Delta.$ Say $e_6$ connects to $e_5.$ If $e_6$ ends inside $\Delta,$ then the crossing number of the bouquet graph is at most four. Otherwise, $e_6$ intersects $\Delta$ twice and $e_4$ once. This gives the diagram in Figure \ref{fig:TypeE}, which has six crossings.
\end{proof}

\begin{figure}
    \centering
    \includegraphics[width=0.5\textwidth]{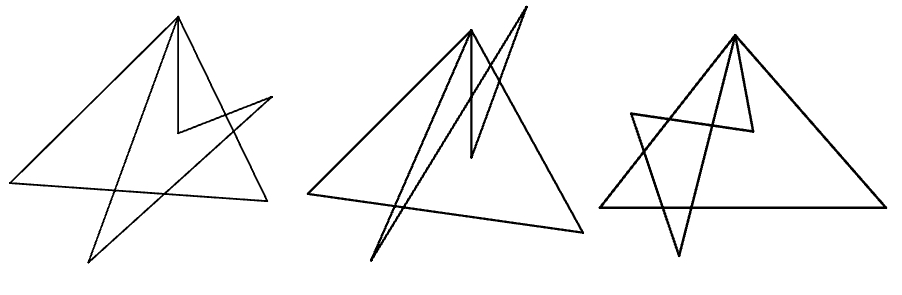}
    \caption{(Left) Type C1 diagram. (Middle) Type C2 diagram. (Right) Type C3 diagram.}
    \label{fig:c1c2c3}
\end{figure}

\begin{figure}
    \centering
    \includegraphics[width=0.4\textwidth]{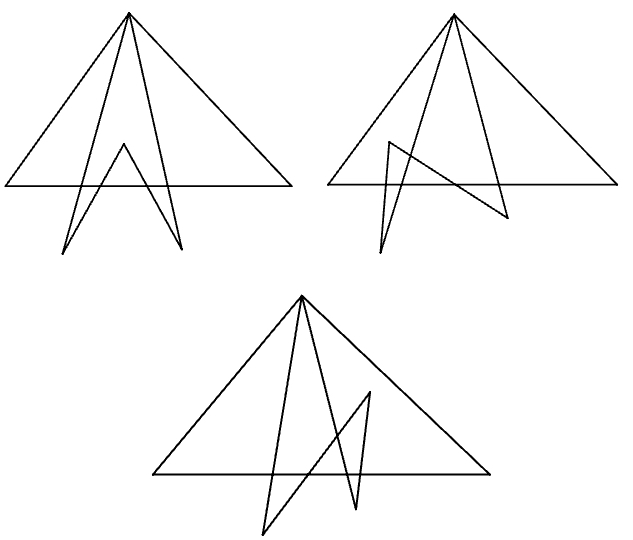}
    \caption{(Top left) Type C4. (Top right) Type C5. (Bottom) Type C6.}
    \label{fig:onlyhite3}
\end{figure}
\begin{figure}
    \centering
    \includegraphics[width=0.5\textwidth]{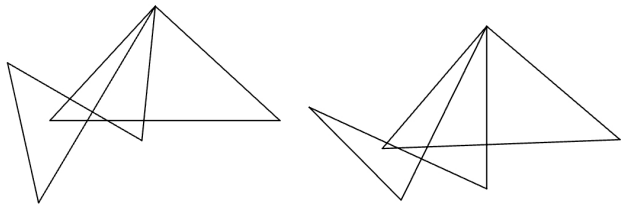}
    \caption{(Left) Type C7 diagram. (Right) Type C8 diagram.}
    \label{fig:e6hit2e7disjoint.jpg}
\end{figure}

\begin{figure}
    \centering
    \includegraphics[width=0.5\textwidth]{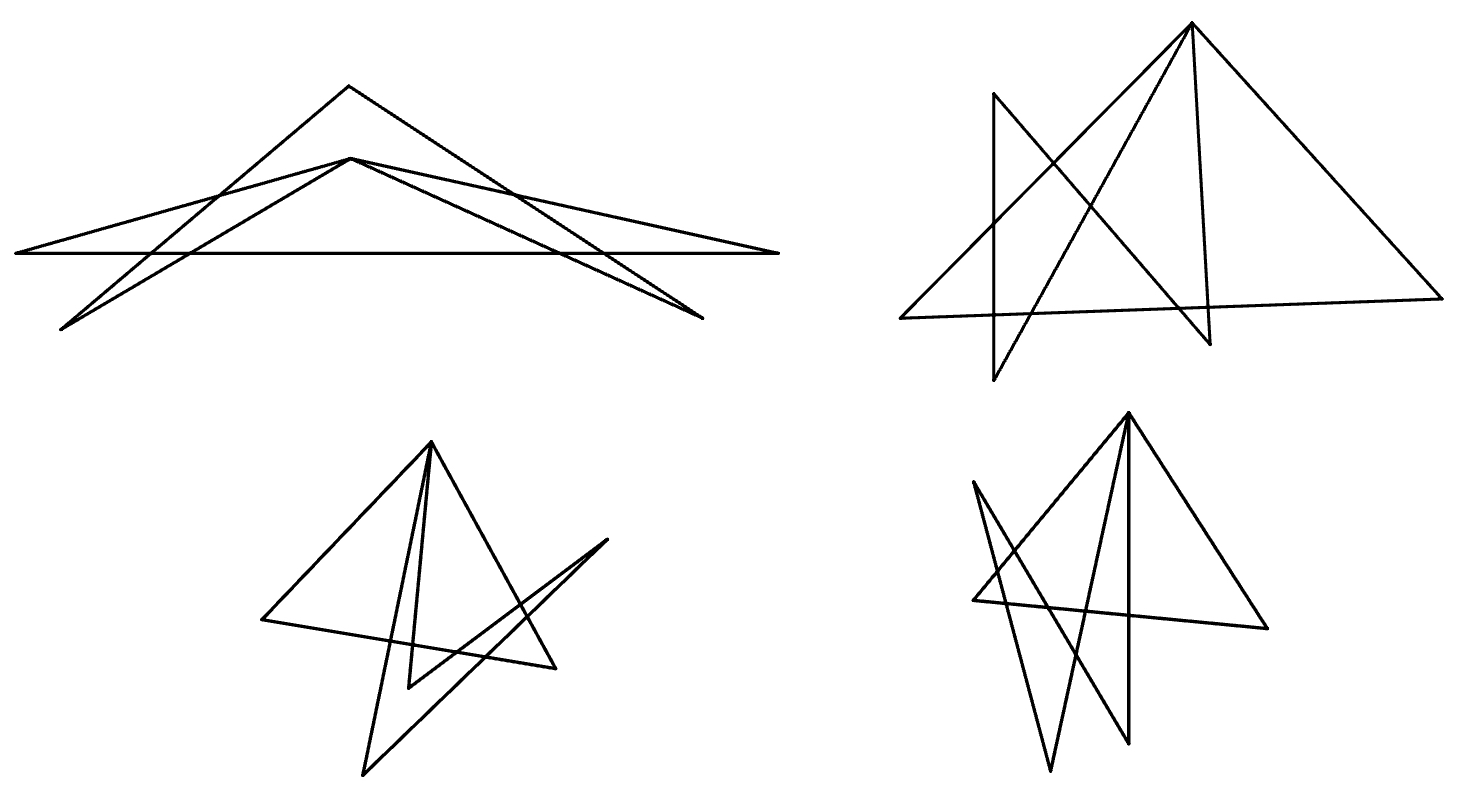}
    \caption{(Upper left) Type C9 diagram. (Upper right) Type C10 diagram. (Lower left) Type C9$'$ diagram. (Lower right) Type C10$'$ diagram.}
    \label{fig:e6e7bothhit2}
\end{figure}

\begin{figure}
    \centering
    \includegraphics[width=0.1\textwidth]{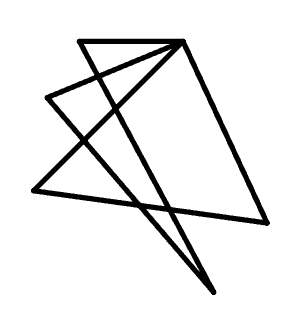}
    \caption{A type D diagram.}
    \label{fig:typeD}
\end{figure}

\begin{figure}
    \centering
    \includegraphics[width=0.2\textwidth]{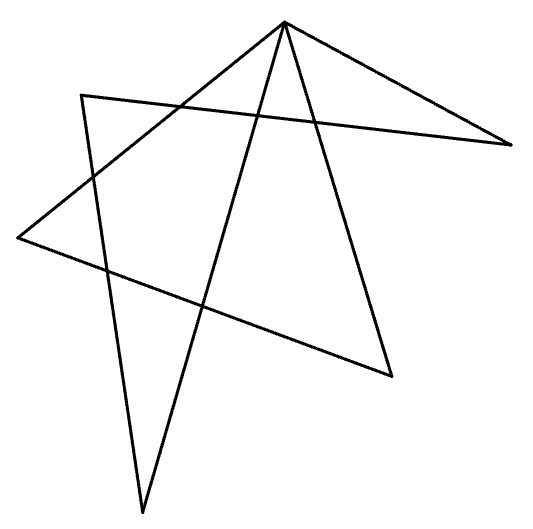}
    \caption{A type E planar stick diagram}
    \label{fig:TypeE}
\end{figure}

\begin{thm}\label{thm:lowsticks}
\begin{enumerate}
    \item The only classical bouquet rigid-vertex graphs $G$ with $\pl[G]=6$ are the graphs $0_1^k,1_1^l,2_1^k, 3_1^l$.
    \item The classical bouquet rigid-vertex graphs $3_1^k,4_1^k,4_2^k,4_3^k, 4_1^l,5_1^k, 5_3^k, 5_5^k,6^k_{19}$ have $\pl[G]=7$.
    \item  The classical bouquet rigid-vertex graphs $5_4^k,5_7^k,5_8^k$ have $\pl[G]=8$.
\end{enumerate}

\end{thm}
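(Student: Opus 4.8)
The plan is to leverage the two enumeration results already established: Lemma \ref{lem:bouquetpl6}, which lists the six-stick diagram types B0--B3, and Theorem \ref{thm:ravelbouquet}, which lists the seven-stick types C1--C10, C9$'$, C10$'$, D, and E. The three parts then split cleanly into upper bounds, obtained by exhibiting explicit diagrams, and lower bounds, obtained by checking that a given rigid-vertex graph cannot be produced by any shorter template. The common starting point is the observation that $\pl[G]\geq 6$ for every bouquet graph: each of the two loops at the $4$-valent vertex $x$ needs at least three straight sticks (one- or two-stick loops degenerate), and the loops share only $x$, forcing at least $3+3=6$ sticks.

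For part (1), I would take each of B0--B3 and run through every admissible over/under assignment at its crossings. Reading off the cyclic order of the four edge-ends at $x$ together with the crossing data identifies the resulting rigid-vertex graph against Oyamaguchi's table. Because the $B$-type templates carry only a handful of crossings, only low-crossing entries can occur, and the computation should return exactly $0_1^k,1_1^l,2_1^k,3_1^l$, each with crossing number at most $3$. Together with $\pl[G]\geq 6$ this gives $\pl[G]=6$ for these four graphs and, since the templates are exhaustive, for no others.

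The upper bounds in parts (2) and (3) are handled by producing explicit seven- and eight-stick diagrams (Figures \ref{fig:4crossings} and \ref{fig:4k3} cover part (2)) and certifying the rigid-vertex type of each. The certification I would use is a sequence of Reidemeister moves R1--R5 carrying the stick diagram onto the corresponding entry of Oyamaguchi's list, exactly as drawn for $4_3^k$ in Figure \ref{fig:4k3}; when a move sequence is cumbersome I would instead read off a PD code and compare a rigid-vertex invariant such as the Yamada polynomial (or, where it suffices, the tricolorability data of Section \ref{section:background}) against the target graph.

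The lower bounds are where the real work lies. For part (2) the argument is short: by part (1) the only bouquet graphs with $\pl=6$ have crossing number at most $3$, whereas $4_1^k,4_2^k,4_3^k,4_1^l,5_1^k,5_3^k,5_5^k,6^k_{19}$ all have crossing number at least $4$, and $3_1^k$ is distinguished from $3_1^l$ as a rigid-vertex graph; hence all nine have $\pl\geq 7$ (for $6^k_{19}$ this is already forced by the five-crossing ceiling that Theorem \ref{thm:crossing} places on any six-stick bouquet). For part (3) I must additionally exclude seven sticks. Since $5_4^k,5_7^k,5_8^k$ have crossing number $5\leq 7$, Theorem \ref{thm:ravelbouquet} does not rule out $\pl=7$ on crossing-number grounds, so I would enumerate all over/under assignments on each of the finitely many templates C1--C10, C9$'$, C10$'$, D, E, compute the resulting rigid-vertex graph for each, and verify that none of them is $5_4^k$, $5_7^k$, or $5_8^k$. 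This exhaustive case analysis---sizable in number and demanding a discriminating invariant to separate the five-crossing rigid-vertex bouquets (and, as elsewhere in the paper, most safely carried out with computer assistance on the PD codes)---is the main obstacle; by contrast part (1) and all the upper bounds are routine once the templates and diagrams are in hand.
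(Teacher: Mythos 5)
Your proposal matches the paper's proof on parts (1) and (2): the paper likewise obtains part (1) by adding all crossing information to the types B0--B3 of Lemma \ref{lem:bouquetpl6} and comparing against Oyamaguchi's table, and obtains the upper bounds in part (2) from the explicit diagrams of Figures \ref{fig:4crossings} and \ref{fig:4k3} together with crossing assignments on the types C3, C5 and D (giving $5_1^k$, $5_5^k$, $5_3^k$), while the lower bound $\pl \geq 7$ follows from part (1) exactly as you argue. The genuine divergence is part (3). You propose to rule out seven sticks by enumerating every over/under assignment on the templates C1--C10, C9$'$, C10$'$, D, E and verifying, with a discriminating rigid-vertex invariant, that $5_4^k, 5_7^k, 5_8^k$ never occur; this is sound in principle given the completeness statement of Theorem \ref{thm:ravelbouquet}, but it is a heavy computation, and you rightly flag it as the main obstacle. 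The paper avoids it entirely with a constituent-knot obstruction: each of $5_4^k, 5_7^k, 5_8^k$ contains a trefoil cycle, and by Theorem \ref{thm:thm2} the trefoil needs at least five planar sticks, whereas in any seven-stick bouquet diagram the two cycles consist of three and four sticks, so both constituent knots are necessarily unknots; hence $\pl \geq 8$, and the upper bound is realized by attaching a trivial triangle to the five-stick trefoil diagram. Note that this same observation would short-circuit your enumeration before it starts: no crossing assignment on any seven-stick template can produce a bouquet with a knotted cycle, so none can be $5_4^k$, $5_7^k$, or $5_8^k$. What your heavier route would buy, if carried out, is a full classification of all rigid-vertex bouquet graphs with $\pl = 7$ rather than mere confirmation of the listed ones; what the paper's route buys is brevity and a more general principle, namely that any bouquet graph containing a constituent knot of planar stick index at least five must have $\pl \geq 8$.
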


\begin{proof}
    For part 1 of the statement, we use Lemma \ref{lem:bouquetpl6}, where planar stick diagrams are classified. By adding all crossing information to Type B0, Type B1, Type B2, and Type B3 planar stick diagrams, we get that there are four possible bouquet graph types with planar stick index equaling six from the table in \cite{oyamaguchi2015enumeration}: $0_1^k,1_1^l,2_1^k, 3_1^l$.

    For part 2, we see from Figure \ref{fig:4crossings} and Figure \ref{fig:4k3} that $3_1^k,4_1^k,4_2^k,4_3^k, 4_1^l$ have planar stick index seven. This takes care of cases where the crossing number is at most 4. The planar stick diagram of type C3 gives $5_1^k$ after crossing information is added. The planar stick diagram of type C5 gives $5_5^k$ after crossing information is added. The planar stick diagram of type D gives $5_3^k$ after crossing information is added.

    For part 3 of the statement, we notice that $5_4^k,5_7^k,5_8^k$ each has a trefoil in it. By Theorem \ref{thm:thm2}, the trefoil knot admits a 5 stick planar diagram $D$, and admits no diagram with fewer sticks. The graphs $5_4^k,5_7^k,5_8^k$ can be obtained from $D$ by adding the trivial cycle of length three.
\end{proof}
\subsection{Ravel topological bouquet graphs}
\begin{lem}\label{notravel}
    Bouquet graphs with at most five crossings in Oyamaguchi's table are not ravels.
\end{lem}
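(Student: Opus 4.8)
The plan is to exploit the structural simplicity of cycles in a bouquet graph. A (two-loop) bouquet graph has a single $4$-valent vertex through which both loops pass, so any two loops meet at that vertex and are never vertex-disjoint. Consequently the only cycles in the graph are the individual loops, and there is no collection of two or more pairwise disjoint cycles. The first step is therefore to record this observation: for a bouquet graph, a sub-collection of disjoint cycles is necessarily a single loop, and such a loop is a one-component link, i.e. a knot. Hence the graph contains a nontrivial link if and only if one of its constituent loops is a nontrivial knot.

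With this reduction in hand, the definition of a ravel collapses to a clean criterion: a bouquet graph $G$ is a ravel if and only if $G$ is topologically nontrivial and both of its loops are unknotted. Proving the lemma then amounts to showing that every bouquet graph with at most five crossings appearing in Oyamaguchi's table \cite{oyamaguchi2015enumeration} fails this criterion, that is, each such graph is either topologically trivial (hence not a ravel by definition) or has at least one loop that is a nontrivial knot.

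The second step is the verification itself, carried out over the finite list in the table. For each nontrivial entry I would read off the two constituent loops from its diagram and identify their knot types by reducing the single-loop sub-diagram; because the total crossing number is at most five, each loop is a knot with few crossings, and in every nontrivial case I expect to locate a trefoil. This is already noted for $5_4^k,5_7^k,5_8^k$ in the proof of Theorem \ref{thm:lowsticks}, each of which contains a trefoil. Exhibiting one knotted loop immediately shows the graph contains a nontrivial knot and so is not a ravel, while the trivial entries are excluded at once.

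The main obstacle is completeness rather than any single hard idea: one must be sure that the knot type of each loop has been correctly extracted for every entry of the table, and in particular that no nontrivial bouquet with both loops unknotted sneaks in below six crossings. This is precisely the threshold at which the genuine ravel $6^k_{19}$ first appears, so the real content of the lemma is that five crossings are not enough to weave an unknotted-loop bouquet into something nontrivial. A secondary point to keep straight is that Oyamaguchi's enumeration is only up to rigid-vertex equivalence; since any topologically trivial graph is automatically not a ravel, collapsing rigid-vertex classes to topological ones can only help and does not affect the conclusion.
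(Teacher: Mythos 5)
Your reduction is correct and matches what the paper uses implicitly: since both loops of a bouquet graph pass through the single 4-valent vertex, the only collections of disjoint cycles are the individual loops, so a bouquet graph is a ravel precisely when it is topologically nontrivial and both of its loops are unknots. The three entries $5_4^k,5_7^k,5_8^k$ are correctly dispatched by exhibiting a trefoil loop, exactly as the paper does.

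The gap is in how you handle every other entry. Your verification procedure --- read off the two loops and identify their knot types --- can only sort entries into ``has a knotted loop'' versus ``both loops unknotted''; it cannot certify that an entry of the second kind is topologically trivial, and that is exactly what must be proved (a ravel such as $6^k_{19}$ has unknotted loops, so unknottedness of the loops says nothing about triviality). Your proposal simply asserts that ``the trivial entries are excluded at once,'' and your closing paragraph concedes the point: you identify the claim that no nontrivial bouquet with both loops unknotted occurs below six crossings as ``the real content of the lemma,'' but you give no argument for it. The paper supplies the missing mechanism: in every table entry with at most five crossings other than $5_4^k,5_7^k,5_8^k$, some pair of adjacent edges emerging from the 4-valent vertex forms a crossing, and such a crossing can be removed by move (R6) of Figure \ref{fig:greid}; iterating this (illustrated for $5_3^k$ in Figure \ref{fig:simplifybouquet}) reduces each of those diagrams to the trivial bouquet. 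Note that this trivialization is genuinely topological --- (R6) is not available in the rigid-vertex category, which is why entries that are nontrivial in Oyamaguchi's rigid-vertex enumeration can nonetheless be topologically trivial. Without some such argument, your proof establishes the lemma only for the three knotted-loop entries.
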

\begin{proof}
    The statement is verified by going through the entries of the table. Note that if two adjacent edges that emerge from the vertex form a crossing right away, we can always remove that crossing by move (R6) in Figure \ref{fig:greid}. We demonstrate this process in Figure \ref{fig:simplifybouquet}, where the rightmost image is three more (R6) moves away from the trivial bouquet graph. This is the case for all entries except $5_4^k,5_7^k,$ and $5_8^k$, so that all diagrams except these three can be trivialized. For the three diagrams $5_4^k,5_7^k,$ and $5_8^k$, one of the cycles in the bouquet graph is a trefoil knot. These conclusions violate the definition of a ravel.
\end{proof}
\begin{figure}
    \centering
    \includegraphics[width=0.5\textwidth]{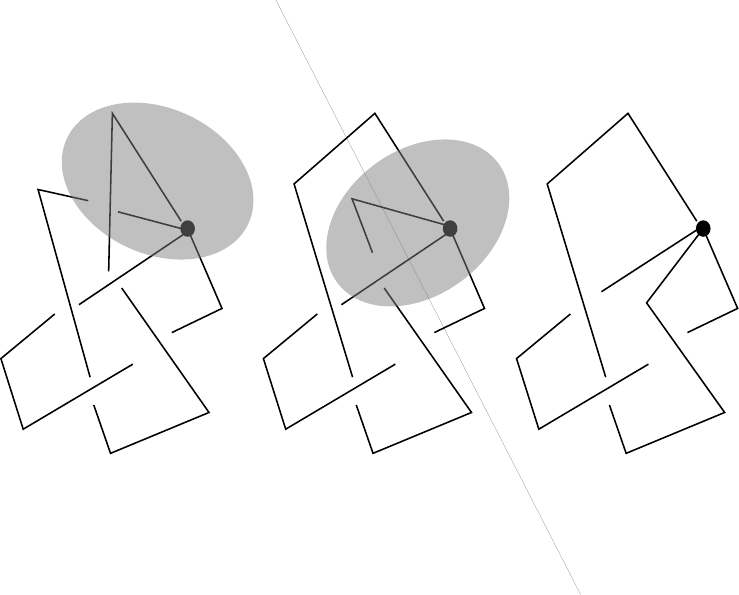}
    \caption{Simplifying a diagram of bouquet $5_3^k$ with moves (R6).}
    \label{fig:simplifybouquet}
\end{figure}

\begin{thm}
    If $G$ is a topological ravel bouquet graph, then $\pl[G]$ is at least 7. Furthermore, the bound is sharp.
\end{thm}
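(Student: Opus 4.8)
The plan is to establish the lower bound by contradiction, using the enumeration already developed in this section, and then to exhibit an explicit ravel attaining $\pl[G]=7$. First I would record the elementary lower bound that every bouquet graph needs at least six sticks: each of its two loops is a nondegenerate closed polygon and therefore requires at least three edges, and since the two loops meet only at the central $4$-valent vertex their sticks are disjoint, so $\pl[G]\geq 6$. Consequently, if a ravel bouquet graph satisfied $\pl[G]\leq 6$, it would in fact have $\pl[G]=6$, and I could feed this into the classification already in hand.

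The core of the lower bound is then a short chain of three results proved above. By Lemma \ref{lem:bouquetpl6}, every $6$-stick planar stick diagram of a bouquet graph is one of the four shadows B0, B1, B2, B3. By part (1) of Theorem \ref{thm:lowsticks}, adding all possible crossing information to these shadows yields only the types $0_1^k, 1_1^l, 2_1^k, 3_1^l$. Finally, Lemma \ref{notravel} shows that each of these---all having at most five crossings---fails to be a ravel. Concatenating these facts, no bouquet graph realized with six sticks can be a ravel, so any ravel bouquet graph must satisfy $\pl[G]\geq 7$.

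For sharpness I would take $G=6^k_{19}$. Example \ref{RavelExample} exhibits a tricolorable diagram of $6^k_{19}$, so $G$ is topologically nontrivial. To confirm that $G$ is a ravel, I note that a bouquet of two circles has no pair of disjoint cycles---the only simple cycles are the two loops, and they share the central vertex---so the condition that no disjoint union of cycles forms a nontrivial link reduces to the requirement that neither loop be knotted, which is verified directly from the diagram. With nontriviality and unknotted loops in hand, $6^k_{19}$ is a ravel, and part (2) of Theorem \ref{thm:lowsticks} gives $\pl[6^k_{19}]=7$, so the lower bound is achieved and the bound is sharp.

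The step I expect to demand the most care is the passage from the rigid-vertex statements (Theorem \ref{thm:lowsticks} and Lemma \ref{notravel}) to the topological conclusion sought here. Because rigid-vertex equivalence refines topological equivalence, the four rigid-vertex types of planar stick index six descend to at most four topological types; since being a ravel is a topological property, it suffices that each listed diagram can be topologically simplified (as in Figure \ref{fig:simplifybouquet}) or contains a knotted loop, which is exactly the content of Lemma \ref{notravel}. The only remaining nonroutine point is the direct check that both loops of $6^k_{19}$ are unknots.
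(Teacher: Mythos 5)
Your proof is correct and follows essentially the same route as the paper: the lower bound comes from combining Lemma \ref{notravel} with the classification of six-stick bouquet diagrams (Lemma \ref{lem:bouquetpl6} and Theorem \ref{thm:lowsticks}, part 1), and sharpness comes from the seven-stick ravel $6^k_{19}$ of Example \ref{RavelExample}. If anything, your write-up is more complete than the paper's two-sentence proof, since you make explicit the descent from rigid-vertex types to topological types and the verification that $6^k_{19}$ actually satisfies the ravel condition (nontrivial by tricolorability, with both loops unknotted), both of which the paper leaves implicit.
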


\begin{proof}
    By Lemma \ref{notravel}, the crossing number of $G$ is at least six. Example \ref{RavelExample} gives an example of a topological ravel bouquet graph with seven planar sticks.
\end{proof}
\section{Theta-curves}\label{section:theta}
For our computations, we use the enumeration result from Moriuchi \cite{moriuchi2009enumeration}.

\begin{thm}
   If $\theta$ is a ravel $\theta$-curve, then $\pl[\theta]\geq 7.$\label{thetaravel}
\end{thm}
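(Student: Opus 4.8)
The plan is to adapt the strategy used for ravel bouquet graphs, with the constituent knots of the $\theta$-curve playing the role of the knotted loop, though the argument will have to go beyond a crossing count. First I would record a structural simplification. A $\theta$-curve consists of two trivalent vertices joined by three arcs, and any two of its three constituent cycles share at least one arc; hence a $\theta$-curve contains no pair of disjoint cycles. Consequently the clause ``no collection of disjoint cycles is a non-trivial link'' in the definition of a ravel collapses to the single requirement that each of the three constituent knots be trivial. Thus $\theta$ is a ravel exactly when $\theta$ is non-planar while all three constituent knots are unknots, and this is the property I aim to contradict for diagrams with at most six sticks.

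Suppose for contradiction that a ravel $\theta$-curve has a planar stick diagram $D$ with $\pl[\theta]\le 6$. Write the three arcs of $D$ as carrying $a\ge b\ge c\ge 1$ sticks with $a+b+c\le 6$, so each constituent cycle is built from $a+b$, $a+c$, or $b+c$ sticks. By Theorem \ref{thm:thm2} a cycle with at most four sticks is automatically the unknot, so a constituent can be knotted only when two arcs together carry at least five sticks. This produces a clean dichotomy according to the stick distribution, which necessarily has at most one single-stick arc since two distinct straight sticks cannot join the same pair of points: either no pair of arcs exceeds four sticks, as for $(2,2,2)$ and $(1,2,2)$, so that every constituent is automatically an unknot; or the distribution is $(1,2,3)$, whose longest constituent uses five sticks, in which case the ravel hypothesis forces even that constituent to be an unknot, drawn non-minimally, so it may be taken in reduced position before the last arc is analysed.

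The core of the argument is a planar enumeration parallel to Lemma \ref{lem:bouquetpl6} and Theorem \ref{thm:ravelbouquet}. I would fix one constituent cycle, formed by two of the arcs, as a polygon: a triangle when one of these arcs is a single stick and a quadrilateral in the balanced $(2,2,2)$ case. Since the two trivalent vertices $u$ and $v$ lie on this polygon, I would classify the third arc by the region, inside or outside, into which it emerges from $u$ and by the region from which it returns to $v$, together with its intersections with the polygon, controlling these intersection numbers by the Jordan-curve parity reasoning behind Observation \ref{lem:intersect}. This mirrors Cases 1--3 of the bouquet proofs. For each resulting stick diagram I would add all admissible crossing assignments, discard the ones that create a non-trivial constituent knot (which are not ravels) and the ones reducible by the moves R1--R6, and match the surviving $\theta$-curve types against Moriuchi's table. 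The assertion to be checked is that every non-trivial outcome on at most six sticks carries a knotted constituent, hence fails to be a ravel, while every outcome all of whose constituents are trivial is itself the trivial $\theta$-curve.

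The hard part will be the balanced $(2,2,2)$ diagrams. Here every constituent is a quadrilateral and therefore automatically unknotted, so crossing counts alone cannot separate a ravel from the trivial graph; indeed Theorem \ref{thm:crossing} only yields $\cro[\theta]\le 6$ in this case, since $n_3=6$ gives $\frac{6(6-3-1)}{2}=6$, which does not by itself forbid a low-crossing ravel. For these diagrams I expect to need a direct triviality argument, showing that any $\theta$-curve assembled from three $2$-stick arcs joining two trivalent vertices can be unknotted by a sequence of moves R1--R6 and is therefore planar. Once the balanced case and the single unbalanced case $(1,2,3)$ are both exhausted, no ravel is realizable on six or fewer sticks, giving $\pl[\theta]\ge 7$; the bound is sharp because the Kinoshita $\theta$-curve of Example \ref{Kinoexample} is a ravel drawn with exactly seven planar sticks.
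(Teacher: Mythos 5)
Your setup is sound, and in places more explicit than the paper's: the reduction of the ravel condition to ``all three constituents are unknots and the graph is nonplanar'' (via the observation that a $\theta$-curve contains no disjoint cycles), the restriction to stick distributions $(2,2,2)$ and $(1,2,3)$ (plus $(1,2,2)$ for five sticks), and the observation that only the five-stick pentagon of the $(1,2,3)$ case can carry a knotted constituent are all correct and are all needed. But the proposal stops exactly where the theorem begins. The two claims you defer --- that ``every non-trivial outcome on at most six sticks carries a knotted constituent,'' and the ``direct triviality argument'' for the balanced $(2,2,2)$ diagrams --- are not routine verifications to be checked later; together they \emph{are} the statement $\pl[\theta]\geq 7$, and you offer no argument for either (they are introduced with ``the assertion to be checked'' and ``I expect to need''). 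Showing that every crossing assignment of every $(2,2,2)$ configuration is trivialized by R1--R6 moves is a substantial enumeration that the proposal does not carry out, so as written this is a plan rather than a proof.

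The gap is also avoidable, because the paper's own proof never unknots any diagram. Its mechanism is purely quantitative: in the quadrilateral case it bounds the crossing number of the diagram strictly below five by a configuration-specific count (each of the two added sticks can meet the cycle $c$ in at most two points), which is sharper than the generic bound of Theorem \ref{thm:crossing} (your $n_3=6$ computation giving $\leq 6$) that led you to abandon the crossing-count route; in the pentagon case it invokes the classification behind Theorem \ref{thm:thm2}: a five-stick cycle either has at most three crossings, or is the five-crossing star, which with alternating crossings forces a knotted torus-knot constituent (so the graph is not a ravel) and with non-alternating crossings admits a Reidemeister~II reduction, again pushing the crossing number below five. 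Combined with the fact, read off from Moriuchi's enumeration (the $\theta$-curve analogue of Lemma \ref{notravel}), that a nontrivial ravel $\theta$-curve has crossing number at least five --- the Kinoshita curve being minimal --- every case closes by counting alone. Finally, one step of your $(1,2,3)$ treatment is not legitimate: you cannot take the unknotted five-stick pentagon ``in reduced position before the last arc is analysed,'' since Reidemeister reductions destroy the linearity of the sticks; the five-crossing non-alternating star must be analyzed as drawn, which is precisely why the paper argues via an R2 reduction of the resulting diagram rather than by redrawing the constituent cycle.
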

\begin{proof}
    Suppose that we have a planar stick diagram $s$ with six planar sticks. By the structure of the theta curve, there must be a cycle made up of four sticks or a cycle made up of five sticks. To see this, suppose a cycle of length three (i.e. a triangle) is found. Then, there are three more edges left unconsidered. These edges form a path connecting two vertices of the triangle. The three edges plus an edge of the triangle gives a cycle of length four. 
    
    \textbf{Case 1}: There is a cycle $c$ of 4 sticks, but no cycle of 5 sticks. A cycle $c$ is then a quadrangle, which can be concave or convex. It can also intersect itself once. 

    In this case, there are 2 edges we must add from vertex $v$ to vertex $w$ to form $s$. Furthermore, there is no edge from $v$ to $w$. All ways of adding two edges in this manner result in a diagram with number of crossings strictly less than five. This is because edge of the two edges we want to add can intersect $c$ in at most two points. Since $5_1$ needs at least five crossings, $s$ is not a planar stick diagram of $5_1$.

\textbf{Case 2}: There is a cycle $c$ of 5 sticks

    We rely on the classification of cycles with 5 planar sticks done in Section \ref{Subsection:ProofTheorem4.2}. We look at the resulting theta curves obtained by adding two edges to $c$. When $c$ is a planar stick diagram with 3 crossings, then any edge we add will not increase the crossing number. Since $5_1$ needs at least five crossings, $s$ is not a planar stick diagram of $5_1$. 

    On the other hand, suppose $c$ is a planar stick diagram with 5 crossings. If we assign the crossing information so that $c$ is alternating, then our theta-curve contains a torus knot, which is a contradiction since all constituents of $5_1$ are unknots. However, if we assign the crossing information so that $c$ is not alternating, then one can reduce the resulting knot diagram by a Reidemeister II move. This means that $s$ is not a planar stick diagram of $5_1$ since it has crossing number less than 5.
\end{proof}

\begin{example}
    The Kinoshita $\theta$-curve, which is the $\theta$-curve $5_1$ in Moriuchi's table admits a planar stick diagram with seven sticks. Therefore, the bound in Theorem \ref{thetaravel} is sharp. See Example \ref{Kinoexample} for a more detailed discussion.
\end{example}
\subsection*{Acknowledgements}
The first author is supported by the Centre of Excellence
in Mathematics, the Commission on Higher Education, Thailand.
\bibliographystyle{plain}
{\small
\bibliography{refs}
}

\appendix 
\newpage
\section{Table of knots with seven planar sticks} \label{appendix}
The following table shows planar diagrams of knots with 7 sticks. For reference, we include previously known bounds from ~\cite{adams2011planar}. There are two lower bounds: $\frac{3+\sqrt{9 + 8\,\cro[K]}}{2}$ denoted by LB1 and   \( 2 \operatorname{b}[K]+1\) denoted by LB2. The upper bound  \(\pl[K] \leq \operatorname{s}[K]-1\) is denoted by UB. Note that planar stick numbers of 3-bridge knots are already known to be at least 7 and planar stick number of knots $8_{19}$ and $8_{20}$ can be determined by these bounds. \\

\noindent
\begin{minipage}{1.0\textwidth}
    \centering
    \vspace{5mm}
    \label{tab:table2a}
    \begin{tabular}{cccccccccc}
        \toprule
        {Knot}  & {LB1} & {LB2} & {UB} & {Diagram} & {Knot}  & {LB1} & {LB2} & {UB} & {Diagram} \\
        \midrule
        \(6_3\)  & 6 & 5 & 7  & \parbox[c]{7em}{\includegraphics[width=7em]{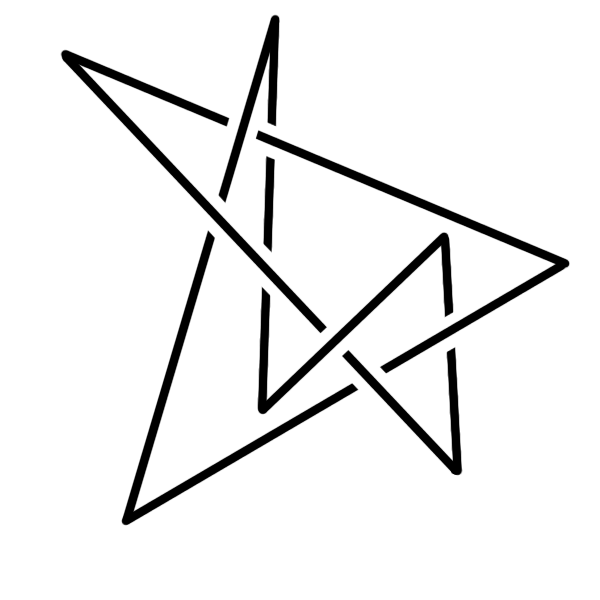}}
        & \(7_6\)  & 6 & 5 & 8  & \parbox[c]{7em}{\includegraphics[width=7em]{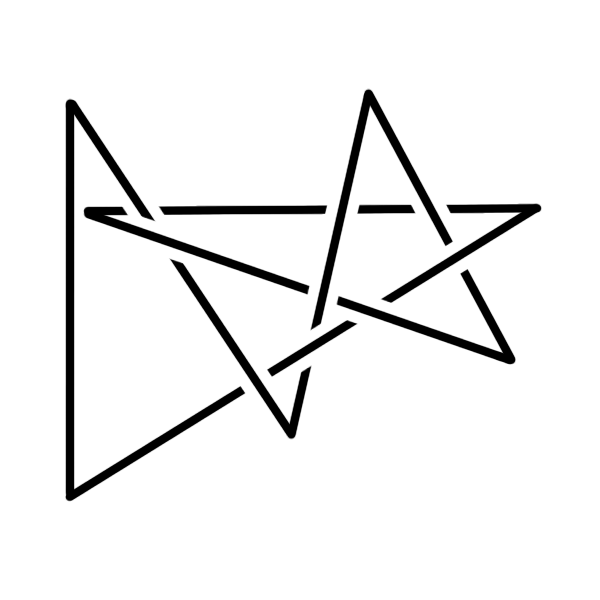}}\\
        \(7_1\)  & 6 & 5 & 8  & \parbox[c]{7em}{\includegraphics[width=7em]{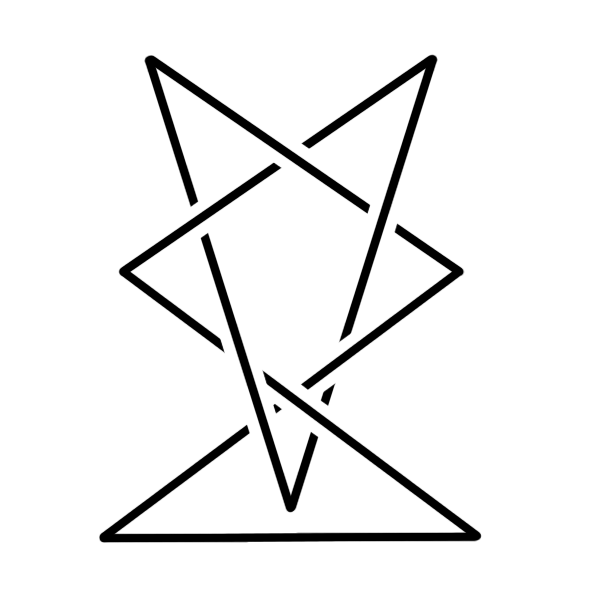}}	
        & \(7_7\)  & 6 & 5 & 8  & \parbox[c]{7em}{\includegraphics[width=7em]{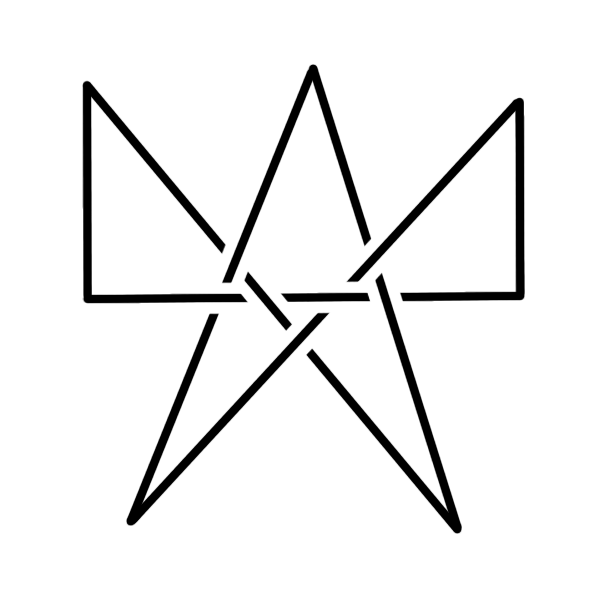}}\\
        \(7_2\)  & 6 & 5 & 8  & \parbox[c]{7em}{\includegraphics[width=7em]{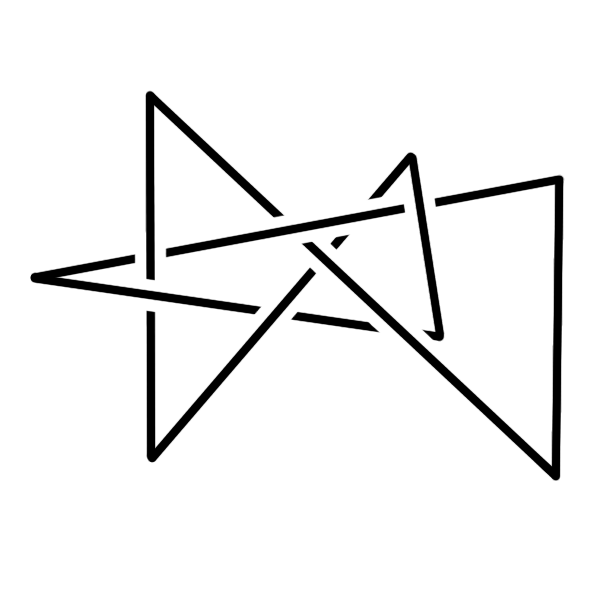}}	
        & \(8_2\)  & 6 & 5 & 9  & \parbox[c]{7em}{\includegraphics[width=7em]{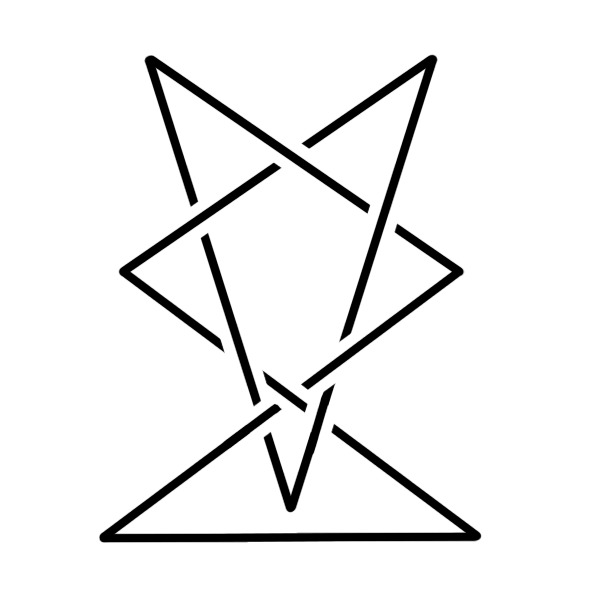}}\\ 
        \(7_3\)  & 6 & 5 & 8  & \parbox[c]{7em}{\includegraphics[width=7em]{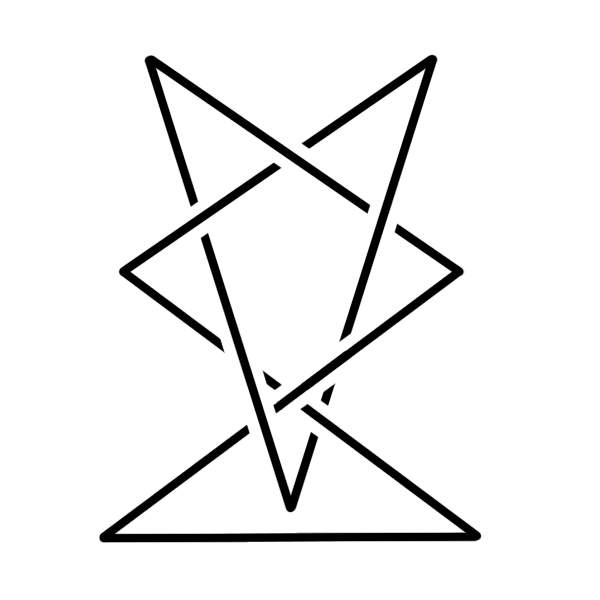}}	
        & \(8_4\)  & 6 & 5 & 9  & \parbox[c]{7em}{\includegraphics[width=7em]{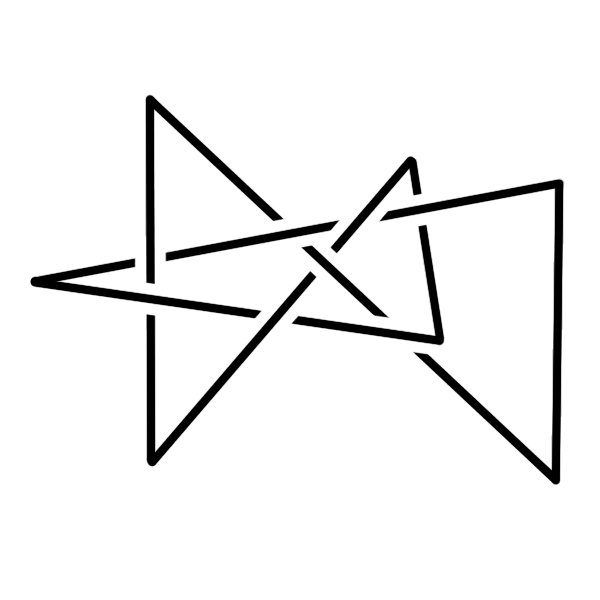}}\\
        \(7_5\)  & 6 & 5 & 8  & \parbox[c]{7em}{\includegraphics[width=7em]{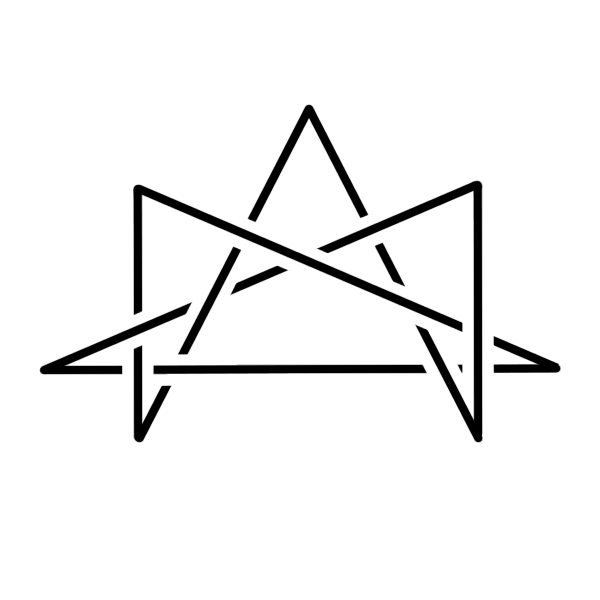}}
    & \(8_6\)  & 6 & 5 & 9  & \parbox[c]{7em}{\includegraphics[width=7em]{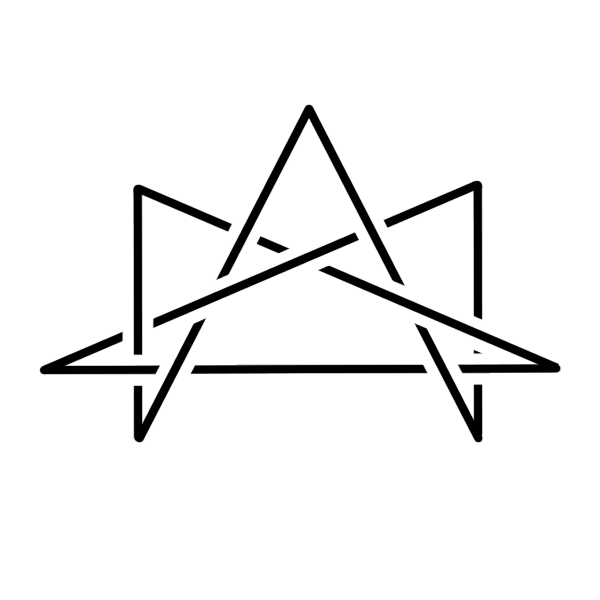}}\\

    \end{tabular}
\end{minipage}

\newpage
\noindent
\begin{minipage}{1.0\textwidth}
    \centering
    \label{tab:table2a}
    \begin{tabular}{cccccccccc}
        \toprule
        {Knot}  & {LB1} & {LB2} & {UB} & {Diagram} & {Knot}  & {LB1} & {LB2} & {UB} & {Diagram} \\
        \midrule
        \(8_7\)  & 6 & 5 & 9  & \parbox[c]{7em}{\includegraphics[width=7em]{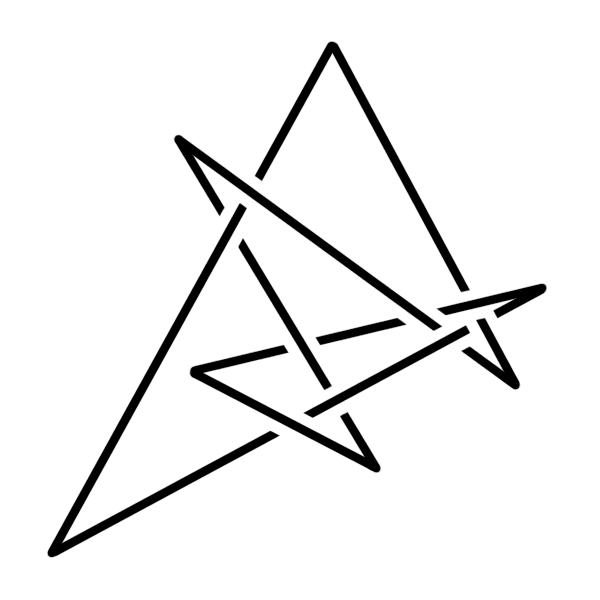}}
        & \(9_6\)  & 6 & 5 & 10  & \parbox[c]{7em}{\includegraphics[width=7em]{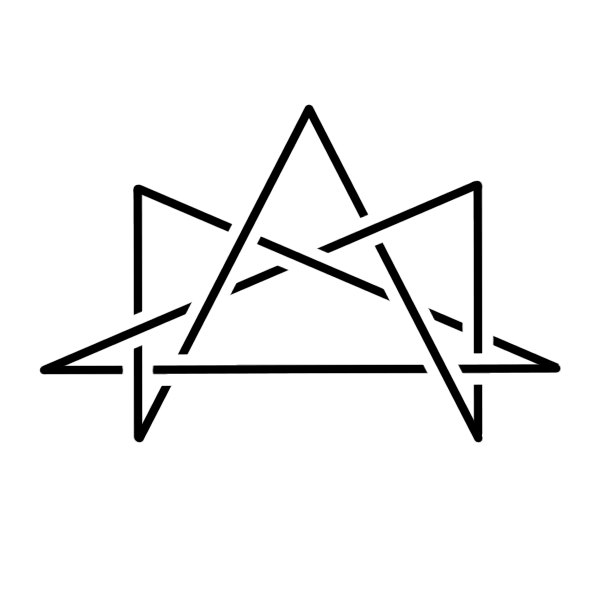}}\\
        \(8_8\)  & 6 & 5 & 9  & \parbox[c]{7em}{\includegraphics[width=7em]{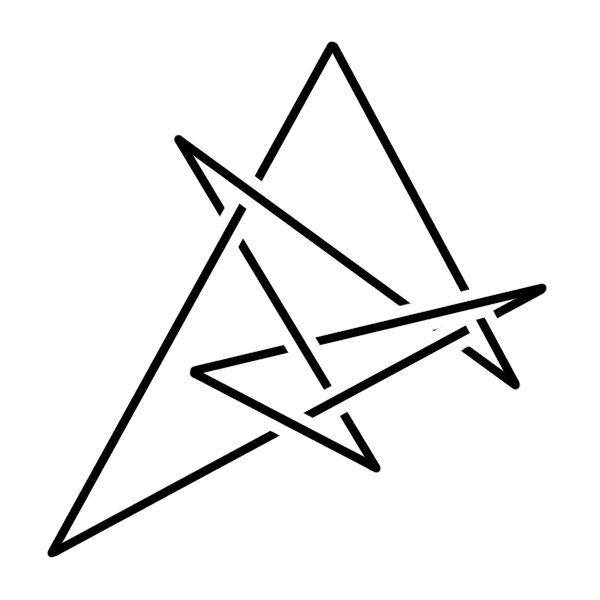}}	
        & \(9_7\)  & 6 & 5 & 9  & \parbox[c]{7em}{\includegraphics[width=7em]{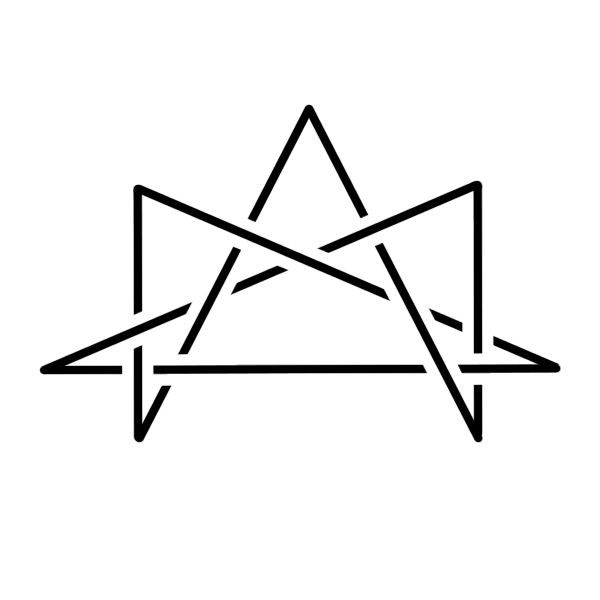}}\\
        \(8_{19}\)  & 6 & 7 & 7  & \parbox[c]{7em}{\includegraphics[width=7em]{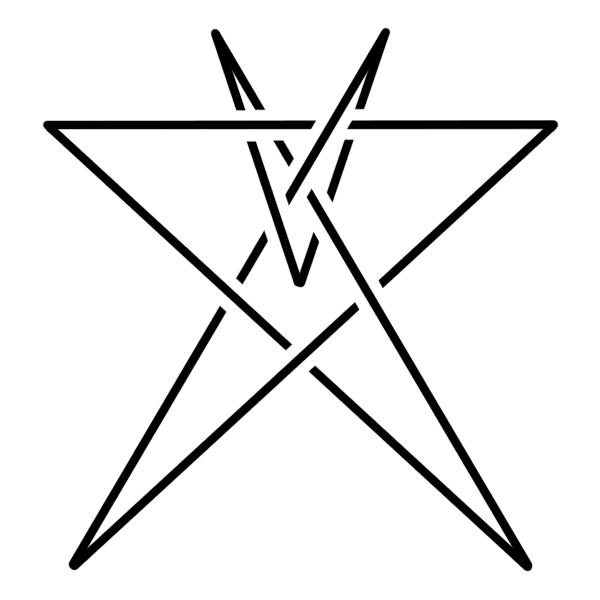}}	
        & \(9_9\)  & 6 & 5 & 9  & \parbox[c]{7em}{\includegraphics[width=7em]{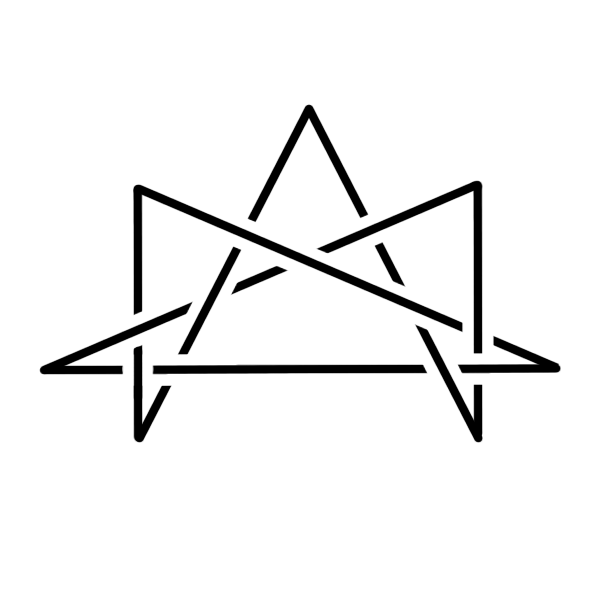}}\\ 
        \(8_{20}\)  & 6 & 7 & 7  & \parbox[c]{7em}{\includegraphics[width=7em]{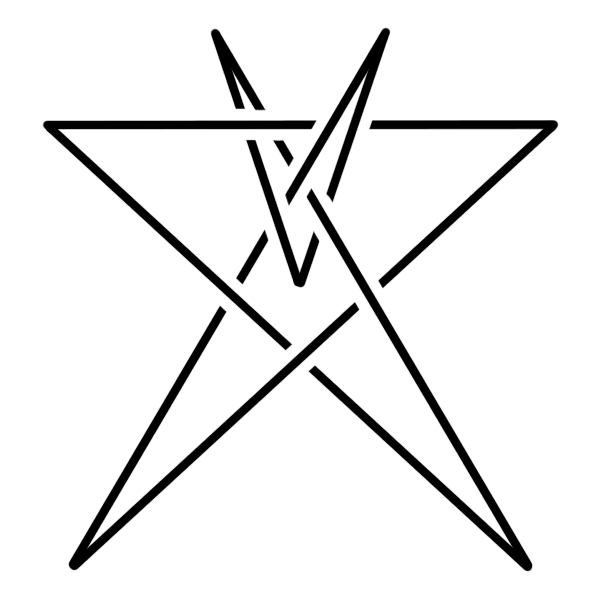}}	
        & \(9_{11}\)  & 6 & 5 & 9  & \parbox[c]{7em}{\includegraphics[width=7em]{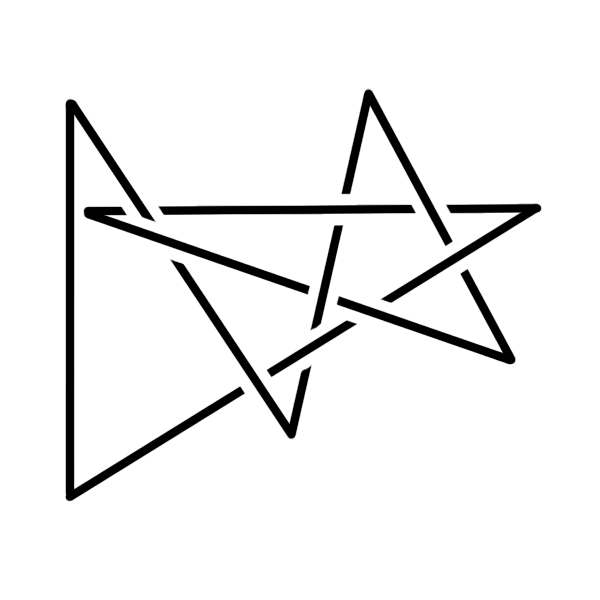}}\\
        \(9_1\)  & 6 & 5 & 9  & \parbox[c]{7em}{\includegraphics[width=7em]{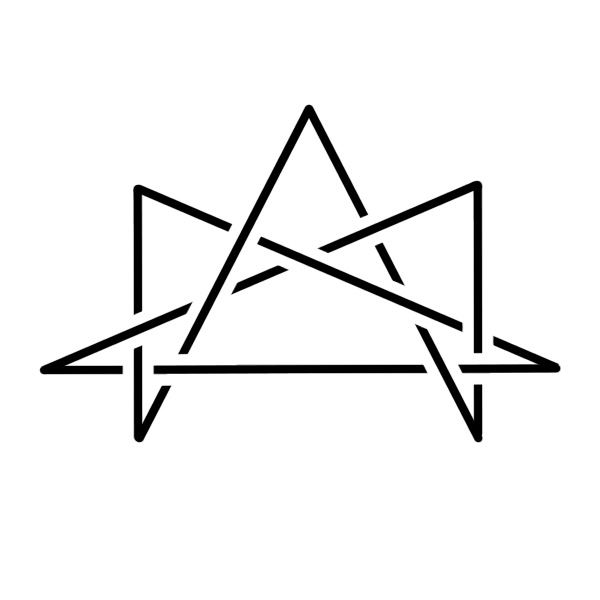}}
    & \(9_{20}\)  & 6 & 5 & 9  & \parbox[c]{7em}{\includegraphics[width=7em]{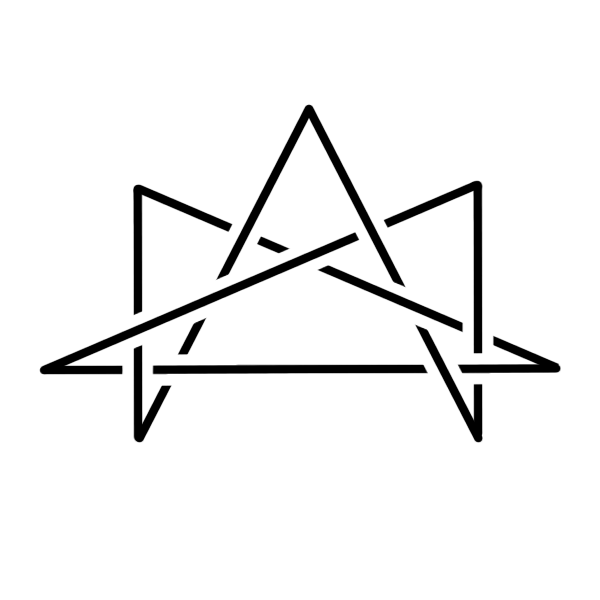}}\\
   &&&&& \(9_{26}\)  & 6 & 5 & 9  & \parbox[c]{7em}{\includegraphics[width=7em]{9-20.png}}
    \end{tabular}
\end{minipage}

\end{document}